\documentclass[12pt]{article}
\usepackage[english]{babel}
\usepackage{amsmath,amssymb,dsfont, float, bm}
\usepackage{amsmath,amssymb,amsfonts,amsthm,mathtools,mathrsfs,booktabs}
\usepackage{tikz,float}
\usepackage{pgfplots}
\pgfplotsset{compat=1.16}
\usepackage{authblk}

\usepackage{natbib}

\usepackage[colorlinks=true, citecolor=blue, urlcolor=blue]{hyperref}
\usepackage{multirow}
\usepackage{bookmark}
\usepackage{booktabs}
\hypersetup{                 
colorlinks=true,
linkcolor=blue,
citecolor=blue,
anchorcolor=blue,
urlcolor=blue,
linktoc=page,
}
\usepackage[a4paper,margin=2.5cm]{geometry}

\newtheorem{theorem}{Theorem}

\newtheorem{corollary}{Corollary}[section]
\newtheorem{example}{Example}[section]

\newtheorem{proposition}{Proposition}[section]
\newtheorem{remark}{Remark}

\begin{document}

\title{\bf 
%
Characterizations of independence between order statistics and rank indicators
}

 \author[]{Roberto Vila }
 \author[]{\, Frederico Almeida \thanks{Corresponding author: frederico.almeida@unb.br 

 \qquad\qquad\qquad\qquad\qquad \ \ \ rovig161@gmail.com}}
 \affil[]{Department of Statistics, University of
 	Bras\'ilia, 70910-900, Bras\'ilia, Brazil}
\setcounter{Maxaffil}{0}
\renewcommand\Affilfont{\itshape\small}

\maketitle
\begin{abstract}
We study the independence between an order statistic and its corresponding rank indicator for independent nonnegative random variables. A general characterization is established through a probability measure obtained by reweighting the distribution of a single observation according to its conditional probability of occupying a prescribed rank. This framework yields explicit expressions for the associated weighting functions and conditional distributions, as well as distribution-free measures of departure from independence based on Kolmogorov and Wasserstein distances. Special attention is devoted to the minimum and maximum order statistics, leading to new characterizations of independent right- and left-censoring under both single and multiple censoring mechanisms. We also establish sufficient conditions based on proportional hazards and proportional reversed hazards models and derive complete characterizations in the classical single-censoring setting. Discrete and continuous examples, together with numerical illustrations, are presented to demonstrate the applicability of the proposed methodology.
\end{abstract}
	\smallskip
	\noindent
	{\small {\bfseries Keywords.} {
    Order statistics; Rank indicators; Independent censoring;
Right censoring; Left censoring; Proportional hazards;
Proportional reversed hazards; Monte Carlo simulation; \verb+R+ software.}}
	\\
	{\small{\bfseries Mathematics Subject Classification (2020).} 60E05, 60E15, 62N01, 62G30, 62E10.}


\section{Introduction}\label{sec1}

{
The study of independence between order statistics and the mechanisms that determine their ranks plays a central role in probability theory and survival analysis \citep{arnold2008order,david2003order,balakrishnan1998order}. In many applications, the quantity of interest is an order statistic, such as the minimum or the maximum of a collection of random variables, together with an indicator identifying the observation responsible for that value. Understanding when these two quantities are independent provides valuable insight into the probabilistic structure of the underlying model and leads to tractable analytical representations.

From a probabilistic perspective, this problem can be interpreted as a particular instance of the more general question of determining when an order statistic is independent of the indicator identifying the observation responsible for that order \citep{david2003order,arnold2008order}. For right-censoring, this corresponds to the minimum order statistic, whereas left-censoring is naturally associated with the maximum order statistic.

In survival analysis, it is commonly assumed that the observed time $Z$ and the event indicator $\delta$ are independent, where $Z=\min\left\{Y, C\right\}$ (for right-censoring) or $Z=\max\left\{Y, C\right\}$ (for left-censoring), and $\delta=\mathds{1}_{\{Y \leqslant C\}}$ (or $\delta=\mathds{1}_{\{Y \geqslant C\}}$). Here, $Y$ and $C$ denote the failure and censoring times, respectively. This assumption, often referred to as independent (or non-informative) censoring, plays a fundamental role in survival analysis: it greatly simplifies the estimation of survival functions and enables valid, unbiased statistical inference under both right- and left-censoring mechanisms \citep{kaplan1958nonparametric,cox1972regression,aalen1978nonparametric,fleming1991counting,cox1975partial,kalbfleisch2002statistical,klein2006survival}.

Although independence between the observed time $Z$ and the event indicator $\delta$ is not a strict requirement in classical survival analysis theory, its presence is a probabilistic property of considerable interest. When this condition holds, the joint distribution of $(Z, \delta)$ simplifies substantially, leading to more tractable analytical expressions, more elegant theoretical developments, and potentially more efficient inferential procedures. Moreover, independence ensures that the distribution of observed times is the same for individuals who experience the event of interest and for those who are censored, eliminating any influence of the event indicator on the distribution of $Z$. These features make the independence between $Z$ and $\delta$ relevant from both theoretical and methodological perspectives, motivating the study of conditions under which this property can be guaranteed \citep{cox1975partial,kalbfleisch2002statistical,klein2006survival}. Several important classes of models have been proposed to study these conditions, among which proportional hazards and proportional reversed hazards models occupy a particularly prominent position \citep{cox1972regression,gupta1998proportional}.

Despite its importance, the independence between $Z$ and $\delta$ is not always straightforward, especially in complex probabilistic models or scenarios where various data-generating mechanisms affect their dependence structure. In this context, identifying the distributions and parameter configurations under which $Z$ and $\delta$ are independent, as well as developing rigorous metrics to quantify deviations from this property, represents a significant contribution to the theory of survival analysis. Therefore, it is essential to devise tools that enable a quantitative assessment of the degree of independence. Such investigations yield deeper insights into how the censoring mechanism influences the distribution of observed survival times.


The main contributions of this work are as follows. First, we establish a general characterization of the independence between an order statistic and its corresponding rank indicator through a probability measure obtained by reweighting the distribution of a single observation according to its conditional probability of occupying a prescribed rank. Second, we derive explicit expressions for the associated weighting functions, conditional distributions, survival functions, and density functions. Third, we specialize the general framework to multiple right- and left-censoring schemes, obtaining necessary and sufficient conditions for independence. Fourth, we establish broad sufficient conditions based on proportional hazards and proportional reversed hazards models, together with complete characterizations in the classical single-censoring setting. Finally, we introduce distribution-free measures based on Kolmogorov and Wasserstein distances, complemented by graphical tools and numerical illustrations that quantify departures from independence. Such distances have become standard tools for comparing probability distributions and measuring discrepancies between stochastic models \citep{shorack1986empirical,villani2009optimal}.


The remainder of this paper is organized as follows. Section~\ref{sec2} introduces the general probabilistic framework and establishes the main characterization result. Sections~\ref{sec3} and~\ref{sec4} apply this framework to multiple right- and left-censoring mechanisms, respectively, deriving measures of departure from independence and characterizations based on proportional hazards and proportional reversed hazards models. Section~\ref{sec7} provides numerical illustrations, and Section~\ref{FinalR} concludes the paper.



}


\section{A general characterization}\label{sec2}

This section develops the general framework underlying the paper. We introduce a probability measure obtained by reweighting the distribution of one observation according to its conditional probability of occupying a prescribed rank and characterize the independence between an order statistic and its corresponding rank indicator in terms of this measure. Explicit expressions for the associated weighting function and the survival function of the order statistic are also derived.

{
\begin{theorem}\label{thm:kth-order-general}
	Let \(X_1,\ldots,X_m\) be independent nonnegative random variables, let
	\[
	Z=X_{k:m},
	\quad
	1\leqslant k\leqslant m,
	\]
 be the \(k\)-th order statistic and, for \(i=1,\ldots,m\), define the rank indicator, indicating whether $X_i=X_{k:m}$, by
	\[
	\delta_i
	=
	\mathds{1}_{\left\{
		\sum_{j\ne i}\mathds{1}_{\{X_j<X_i\}}<k
		\leqslant
		1+\sum_{j\ne i}\mathds{1}_{\{X_j\leqslant X_i\}}
		\right\}}.
	\]
	Furthermore, let
	\[
	W_{k,i}(x)
	=
	\mathbb P(\delta_i=1\mid X_i=x),
	\]
	assume that \(\mathbb E[W_{k,i}(X_i)]>0\), and define the probability measure
	\[
	\mathbb P_{k,i}^*(A)
	=
	\frac{
		\mathbb E\!\left[
		W_{k,i}(X_i)\mathds{1}_{\{X_i\in A\}}
		\right]
	}{
		\mathbb E\!\left[
		W_{k,i}(X_i)
		\right]
	},
	\quad
	A\in\mathcal B(\mathbb R_+),
	\]
	with survival function \(S_{k,i}^*(z)=1-\mathbb P_{k,i}^*((-\infty,z])\), where \(\mathcal{B}(\mathbb{R}_+)\) denotes the Borel \(\sigma\)-algebra on
\(\mathbb{R}_+=[0,\infty)\). We have
	\[
	S_{k,i}^*(z)=S_{k:m}(z),
	\quad
	z\geqslant 0,
	\]
	if and only if \(Z\) and \(\delta_i\) are independent.
\end{theorem}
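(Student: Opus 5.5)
The plan is to show that $\mathbb P_{k,i}^*$ is exactly the conditional law of $Z$ given $\delta_i=1$. The claimed equivalence then reduces to the elementary fact that, for a binary variable, independence is equivalent to the conditional law given one value coinciding with the marginal law.

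\emph{Step 1.} On the event $\{\delta_i=1\}$ we have $Z=X_i$. Indeed, the defining inequalities say that fewer than $k$ of the other observations lie strictly below $X_i$, while at least $k$ of the observations, counting $X_i$ itself, lie at or below $X_i$. Hence $X_i$ occupies position $k$ in the sorted sample, ties included. Therefore, for any Borel $A$,
\[
\mathbb P(Z\in A,\ \delta_i=1)=\mathbb P(X_i\in A,\ \delta_i=1)=\mathbb E\bigl[\mathds{1}_{\{X_i\in A\}}\,\mathbb P(\delta_i=1\mid X_i)\bigr]=\mathbb E\bigl[W_{k,i}(X_i)\mathds{1}_{\{X_i\in A\}}\bigr],
\]
by the tower property. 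Here $W_{k,i}$ is a version of the regular conditional probability; by independence it can be written explicitly as $W_{k,i}(x)=\mathbb P\bigl(\sum_{j\neq i}\mathds 1_{\{X_j<x\}}<k\leqslant 1+\sum_{j\neq i}\mathds 1_{\{X_j\leqslant x\}}\bigr)$, which is Borel in $x$. Taking $A=\mathbb R_+$ gives $\mathbb P(\delta_i=1)=\mathbb E[W_{k,i}(X_i)]>0$. Consequently $\mathbb P_{k,i}^*(A)=\mathbb P(Z\in A\mid \delta_i=1)$, so $S_{k,i}^*(z)=\mathbb P(Z>z\mid\delta_i=1)$.

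\emph{Step 2.} For the forward direction, suppose $S_{k,i}^*=S_{k:m}$. Then $\mathbb P(Z>z,\delta_i=1)=\mathbb P(Z>z)\,\mathbb P(\delta_i=1)$ for all $z\geqslant0$. Subtracting from the marginal, $\mathbb P(Z>z,\delta_i=0)=\mathbb P(Z>z)\,\mathbb P(\delta_i=0)$. The sets $\{(z,\infty)\}$ form a $\pi$-system generating $\mathcal B(\mathbb R_+)$, and $Z\geqslant0$. So by Dynkin's $\pi$--$\lambda$ theorem, or simply because a survival function determines a law on $\mathbb R_+$, the factorization extends to all Borel sets for both values of $\delta_i$. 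This is independence. For the converse, if $Z$ and $\delta_i$ are independent, then $\mathbb P(Z>z\mid\delta_i=1)=\mathbb P(Z>z)$, which is $S_{k,i}^*(z)=S_{k:m}(z)$ by Step 1.

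\emph{Main obstacle.} The only delicate point is Step 1. One must check carefully that the rank indicator forces $X_i=X_{k:m}$ even in the presence of ties, since with discrete laws several indices may satisfy $\delta_i=1$ simultaneously. One must also justify the conditioning identity via a measurable version of $W_{k,i}$, which the explicit formula supplied by independence provides. The remaining steps are routine.
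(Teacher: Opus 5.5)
Your proof is correct and follows essentially the same route as the paper: you identify $\mathbb P_{k,i}^*$ with the conditional law of $Z$ given $\delta_i=1$ via the tower property and the fact that $X_i=Z$ on $\{\delta_i=1\}$, then pass from half-lines to all Borel sets by $\pi$--$\lambda$ uniqueness. The differences are cosmetic. You subtract to get the $\delta_i=0$ factorization before extending to Borel sets, whereas the paper extends first, and you explicitly justify a Borel version of $W_{k,i}$, which the paper leaves implicit.
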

\begin{proof}
For every Borel set \(A\in\mathcal{B}(\mathbb{R}_+)\), the definition of
conditional expectation gives
\begin{align*}
\mathbb E\!\left[
W_{k,i}(X_i)\mathds{1}_{\{X_i\in A\}}
\right]
&=
\mathbb E\!\left[
\mathbb E[\delta_i\mid X_i]\mathds{1}_{\{X_i\in A\}}
\right]  
\\[0,2cm]
&=
\mathbb E\!\left[
\delta_i\mathds{1}_{\{X_i\in A\}}
\right]
=
\mathbb P(X_i\in A,\delta_i=1).
\end{align*}
Likewise, taking $A=\mathbb{R}_+$, 
$
\mathbb E[W_{k,i}(X_i)]
=
\mathbb E[\delta_i]
=
\mathbb P(\delta_i=1).
$
Since \(\mathbb E[W_{k,i}(X_i)]>0\), it follows that
\[
\mathbb P_{k,i}^*(A)
=
\frac{\mathbb P(X_i\in A,\delta_i=1)}
{\mathbb P(\delta_i=1)}
=
\mathbb P(X_i\in A\mid\delta_i=1).
\]

Now, on the event \(\{\delta_i=1\}\), the random variable \(X_i\) occupies
the \(k\)-th position among \(X_1,\ldots,X_m\). Hence,
$X_i=Z$
on $\{\delta_i=1\}$,
and therefore
$
\mathbb P(X_i\in A,\delta_i=1)
=
\mathbb P(Z\in A,\delta_i=1).
$
Consequently,
\[
\mathbb P_{k,i}^*(A)
=
\mathbb P(Z\in A\mid\delta_i=1),
\]
showing that \(\mathbb P_{k,i}^*\) is precisely the conditional distribution
of \(Z\) given \(\delta_i=1\). In particular,
\[
S_{k,i}^*(z)
=
\mathbb P(Z>z\mid\delta_i=1),
\quad z\geqslant0.
\]

Suppose first that
$
S_{k,i}^*(z)=S_{k:m}(z),
\ z\geqslant 0.
$
Since
$
S_{k:m}(z)=S_Z(z),
$
we obtain
\[
\mathbb P(Z>z\mid\delta_i=1)
=
S_Z(z),
\quad z\geqslant 0.
\]
Because the class of half-lines
$
\{(z,\infty):z\geqslant 0\}
$
is a \(\pi\)-system generating \(\mathcal{B}(\mathbb{R}_+)\), the uniqueness
theorem for probability measures implies that
$
\mathbb P(Z\in A\mid\delta_i=1)
=
\mathbb P(Z\in A),
\
A\in\mathcal{B}(\mathbb{R}_+).
$
Therefore,
\[
\mathbb P(Z\in A,\delta_i=1)
=
\mathbb P(Z\in A)\mathbb P(\delta_i=1),
\quad
A\in\mathcal{B}(\mathbb{R}_+).
\]
Since \(\delta_i\) is Bernoulli,
\[
\mathbb P(Z\in A,\delta_i=0)
=
\mathbb P(Z\in A)
-
\mathbb P(Z\in A,\delta_i=1)
=
\mathbb P(Z\in A)\mathbb P(\delta_i=0),
\]
which proves that \(Z\) and \(\delta_i\) are independent.

Conversely, suppose that \(Z\) and \(\delta_i\) are independent. Then,
for every Borel set \(A\),
\[
\mathbb P(Z\in A\mid\delta_i=1)
=
\mathbb P(Z\in A).
\]
Using the identity
$
\mathbb P_{k,i}^*(A)
=
\mathbb P(Z\in A\mid\delta_i=1),
$
proved above, we conclude that
\[
\mathbb P_{k,i}^*(A)
=
\mathbb P(Z\in A),
\quad
A\in\mathcal{B}(\mathbb{R}_+).
\]
Hence the two probability measures coincide, and therefore their survival
functions satisfy
$
S_{k,i}^*(z)
=
S_{k:m}(z),
\ z\geqslant 0.
$
This completes the proof.
\end{proof}

\subsection{Explicit expressions}

In this subsection, we derive explicit expressions for the weighting functions introduced in Theorem \ref{thm:kth-order-general}, together with corresponding representations for the survival function of the $k$th order statistic. Particular attention is devoted to the minimum and maximum order statistics, which play a central role in the right- and left-censoring models studied in the subsequent sections.

\begin{proposition}\label{prop:W-general}
	Let \(X_1,\ldots,X_m\) be independent nonnegative random variables.  Then, for every
	\(i=1,\ldots,m\), $W_{k,i}(x)$ and $S_{k:m}(z)$ given in Theorem \ref{thm:kth-order-general} can be expressed as
	\[
	\begin{aligned}
		W_{k,i}(x)
		=
		\sum_{\substack{
				L,E\subseteq\{1,\ldots,m\}\setminus\{i\}\\
				L\cap E=\varnothing\\
				|L|<k\leqslant |L|+|E|+1
		}}
		\prod_{j\in L}\!\left[1-S_j(x^-)\right]
		\prod_{j\in E}\!\left[S_j(x^-)-S_j(x)\right]
		\prod_{\ell\notin L\cup E\cup\{i\}}S_\ell(x),
			\quad 
		x\geqslant 0,
	\end{aligned}
	\]
	and
		\begin{align*}
		S_{k:m}(z)	
		=
		\sum_{r=m-k+1}^{m}
		\;
		\sum_{\substack{
				B\subset\{1,\ldots,m\}\\
				|B|=r
		}}
		\prod_{i\in B}S_i(z)
		\prod_{j\notin B}\bigl[1-S_j(z)\bigr],
		\quad
		z\geqslant 0,
	\end{align*}	
	where
	$
	S_j(x^-)
	=
	\lim_{y\uparrow x}S_j(y)
	=
	\mathbb P(X_j\geqslant x).
	$
\end{proposition}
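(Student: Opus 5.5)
Both formulas follow from conditioning on the position of $X_i$ and then using independence to factor probabilities of events defined coordinatewise. Fix $i$ and $x\geqslant 0$. By independence of $X_i$ from $(X_j)_{j\ne i}$, a version of the conditional probability is obtained by substituting $X_i=x$ into the defining event:
\[
W_{k,i}(x)=\mathbb P\Bigl(\sum_{j\ne i}\mathds 1_{\{X_j<x\}}<k\leqslant 1+\sum_{j\ne i}\mathds 1_{\{X_j\leqslant x\}}\Bigr).
\]
This is the substitution (freezing) lemma for independent variables. It is the only point needing care, since $W_{k,i}$ is defined only $\mathbb P_{X_i}$-a.e., and we take this explicit version as the canonical choice.

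Next, classify each $j\ne i$ according to the position of $X_j$ relative to $x$. Let $L=\{j\ne i: X_j<x\}$ and $E=\{j\ne i:X_j=x\}$; the remaining indices satisfy $X_\ell>x$. The sets $L$ and $E$ are disjoint and random, and the events $\{L=L_0,E=E_0\}$, taken over disjoint pairs $(L_0,E_0)$ of subsets of $\{1,\ldots,m\}\setminus\{i\}$, partition the sample space. On each such event we have $\sum_{j\ne i}\mathds 1_{\{X_j<x\}}=|L_0|$ and $\sum_{j\ne i}\mathds 1_{\{X_j\leqslant x\}}=|L_0|+|E_0|$. Hence the defining event is the disjoint union of $\{L=L_0,E=E_0\}$ over pairs with $|L_0|<k\leqslant |L_0|+|E_0|+1$. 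By independence of the $X_j$,
\[
\mathbb P(L=L_0,E=E_0)=\prod_{j\in L_0}\mathbb P(X_j<x)\prod_{j\in E_0}\mathbb P(X_j=x)\prod_{\ell\notin L_0\cup E_0\cup\{i\}}\mathbb P(X_\ell>x).
\]
Substituting $\mathbb P(X_j<x)=1-S_j(x^-)$, $\mathbb P(X_j=x)=S_j(x^-)-S_j(x)$ and $\mathbb P(X_\ell>x)=S_\ell(x)$, and summing over the admissible pairs, gives the formula for $W_{k,i}$.

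For $S_{k:m}$, note that $X_{k:m}>z$ holds exactly when at most $k-1$ observations are $\leqslant z$, that is, when at least $m-k+1$ observations exceed $z$. Partition this event according to the exact random set $B=\{j:X_j>z\}$, whose size $r$ ranges over $m-k+1,\ldots,m$. Independence then gives $\mathbb P(B=B_0)=\prod_{i\in B_0}S_i(z)\prod_{j\notin B_0}[1-S_j(z)]$, and summing over all such $B_0$ yields the stated expression. The counting equivalence $\{X_{k:m}>z\}=\{\#\{j:X_j\leqslant z\}\leqslant k-1\}$ holds with ties, because the $k$-th smallest value is $\leqslant z$ if and only if at least $k$ values are $\leqslant z$.

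Apart from the a.e. issue for $W_{k,i}$, the only real obstacle is keeping strict and non-strict inequalities straight when atoms are present, in particular the left limits $S_j(x^-)$. Both parts are otherwise finite partitions followed by product factorization.
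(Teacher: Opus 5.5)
Your proposal is correct and follows essentially the same argument as the paper: partition by the random index sets $L=\{j\ne i: X_j<x\}$ and $E=\{j\ne i: X_j=x\}$ (and, for $S_{k:m}$, by the set $B$ of indices whose values exceed $z$), then factor each piece by independence. Two points the paper leaves implicit are handled explicitly in your version: you invoke the substitution lemma to fix a version of the a.e.-defined $W_{k,i}$, and you check the counting equivalence for $X_{k:m}>z$ when ties are present.
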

\begin{proof}
	Condition on \(X_i=x\) and define the random sets
	$
	L=\{j\neq i:X_j<x\},
	\ 
	E=\{j\neq i:X_j=x\}.
	$
	Then \(L\) and \(E\) are disjoint random subsets of
	\(\{1,\ldots,m\}\setminus\{i\}\). Moreover, since
	\(|L|\) observations are strictly smaller than \(x\) and
	\(|E|+1\) observations (including \(X_i\)) are equal to \(x\),
	the value \(x\) occupies the consecutive positions
	$
	|L|+1,\ldots,|L|+|E|+1
	$
	in the ordered sample. Hence,
	$
	\delta_i=1$
	if and only if
	$
	|L|<k\leqslant |L|+|E|+1.
	$
	
	Therefore,
	\[
	\{\delta_i=1\}
	=
	\bigcup_{\substack{
			A,B\subseteq\{1,\ldots,m\}\setminus\{i\}\\
			A\cap B=\varnothing\\
			|A|<k\leqslant |A|+|B|+1}}
	\{L=A,\;E=B\},
	\]
	where the union is disjoint. Moreover,
	\[
	\{L=A,\;E=B\}
	=
	\bigcap_{j\in A}\{X_j<x\}
	\cap
	\bigcap_{j\in B}\{X_j=x\}
	\cap
	\bigcap_{\ell\notin A\cup B\cup\{i\}}
	\{X_\ell>x\}.
	\]
	The first identity now follows immediately from the independence of
	\(X_1,\ldots,X_m\) and the equalities
	$
	\mathbb P(X_j<x)=1-S_j(x^-),\
	\mathbb P(X_j=x)=S_j(x^-)-S_j(x),\
	\mathbb P(X_j>x)=S_j(x).
	$
	
	The second identity follows immediately from the independence of
	\(X_1,\ldots,X_m\) and the disjoint decomposition
	\[
	\{X_{k:m}>z\}
	=
	\bigcup_{r=m-k+1}^{m}
	\;
	\bigcup_{\substack{
			B\subseteq\{1,\ldots,m\}\\
			|B|=r}}
	\left(
	\bigcap_{i\in B}\{X_i>z\}
	\cap
	\bigcap_{j\notin B}\{X_j\leqslant z\}
	\right).
	\]
	This completes the proof.
\end{proof}
}

\begin{proposition}\label{cor:min-max-general}
	Let \(X_1,\ldots,X_m\) be independent nonnegative random variables. Then, for
	\(i=1,\ldots,m\),
	\[
	W_{1,i}(x)
	=
	\prod_{j\ne i}S_j(x^-),
	\quad
	W_{m,i}(x)
	=
	\prod_{j\ne i}\bigl[1-S_j(x)\bigr],
	\quad
	x\geqslant 0.
	\]
	Moreover,
	\[
	S_{1:m}(z)
	=
	\prod_{i=1}^{m}S_i(z),
	\quad
	S_{m:m}(z)
	=
	1-
	\prod_{i=1}^{m}\bigl[1-S_i(z)\bigr],
	\quad
	z\geqslant 0.
	\]
\end{proposition}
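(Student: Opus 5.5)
The plan is to specialize Proposition~\ref{prop:W-general} to $k=1$ and $k=m$, and to check each formula directly from the events involved so the combinatorial sums collapse transparently.

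\textbf{Case $k=1$.} The constraint $|L|<1\leqslant |L|+|E|+1$ forces $L=\varnothing$, while $E$ ranges over all subsets of $\{1,\ldots,m\}\setminus\{i\}$, since the right-hand inequality always holds. The sum then becomes
\[
\sum_{E\subseteq\{1,\ldots,m\}\setminus\{i\}}\prod_{j\in E}\bigl[S_j(x^-)-S_j(x)\bigr]\prod_{\ell\notin E\cup\{i\}}S_\ell(x).
\]
By the binomial-type expansion $\prod_{j\ne i}(a_j+b_j)=\sum_E\prod_{j\in E}a_j\prod_{j\notin E}b_j$, this equals $\prod_{j\ne i}S_j(x^-)$. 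Equivalently, $\delta_i=1$ given $X_i=x$ exactly when $X_j\geqslant x$ for all $j\ne i$, and independence gives the product.

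\textbf{Case $k=m$.} The constraint $|L|<m\leqslant |L|+|E|+1$ is automatic on the left, because $|L|\leqslant m-1$. On the right it forces $|L|+|E|=m-1$, so $L\cup E=\{1,\ldots,m\}\setminus\{i\}$ and no index remains for the $S_\ell(x)$ factor. The sum is therefore
\[
\sum_{L\sqcup E}\prod_{j\in L}\bigl[1-S_j(x^-)\bigr]\prod_{j\in E}\bigl[S_j(x^-)-S_j(x)\bigr]=\prod_{j\ne i}\bigl[1-S_j(x)\bigr],
\]
again by the same expansion. This matches the direct description: every $X_j\leqslant x$.

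\textbf{Survival functions.} For $S_{1:m}$, only $r=m$ contributes to the sum in Proposition~\ref{prop:W-general}. Then $B$ is the full index set, which gives $\prod_i S_i(z)$; equivalently, $\{X_{1:m}>z\}=\bigcap_i\{X_i>z\}$. For $S_{m:m}$, the sum runs over $r=1,\ldots,m$. Summing over all $B$, including $B=\varnothing$, gives $\prod_i[S_i+(1-S_i)]=1$, so the requested expression is $1$ minus the $B=\varnothing$ term, namely $1-\prod_i[1-S_i(z)]$. Equivalently, $\{X_{m:m}\leqslant z\}=\bigcap_i\{X_i\leqslant z\}$.

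The only delicate step is the bookkeeping for ties in the general formula. Left limits appear for the minimum, because ties at $x$ still allow $X_i$ to occupy rank $1$, whereas plain $1-S_j(x)$ appears for the maximum. I would present the direct event argument as the main proof and cite Proposition~\ref{prop:W-general} as a consistency check.
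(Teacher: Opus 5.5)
Your proposal is correct and matches the paper's proof. Like the paper, you specialize Proposition~\ref{prop:W-general} to $k=1$ (forcing $L=\varnothing$) and to $k=m$ (forcing $L\cup E=\{1,\ldots,m\}\setminus\{i\}$), collapse the sums by the distributive law, and read off $S_{1:m}$ and $S_{m:m}$ from the same proposition, while your direct event descriptions ($X_j\geqslant x$, resp.\ $X_j\leqslant x$, for all $j\ne i$) are a valid shortcut that does not change the substance.
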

\begin{proof}
	For \(k=1\), the condition
	$
	|L|<1\leqslant |L|+|E|+1
	$
	is equivalent to \(L=\varnothing\). Hence,
	\[
	W_{1,i}(x)
	=
	\sum_{E\subseteq\{1,\ldots,m\}\setminus\{i\}}
	\prod_{j\in E}\!\left[S_j(x^-)-S_j(x)\right]
	\prod_{\ell\notin E\cup\{i\}}S_\ell(x).
	\]
	Applying the distributive law,
	\[
	W_{1,i}(x)
	=
	\prod_{j\ne i}
	\left[
	S_j(x)+S_j(x^-)-S_j(x)
	\right]
	=
	\prod_{j\ne i}S_j(x^-).
	\]
	
	Similarly, for \(k=m\),
	$
	|L|<m\leqslant |L|+|E|+1
	$
	is equivalent to
	$
	L\cup E=\{1,\ldots,m\}\setminus\{i\},
	$
	yielding
	\[
	W_{m,i}(x)
	=
	\sum_{L\subseteq\{1,\ldots,m\}\setminus\{i\}}
	\prod_{j\in L}\!\left[1-S_j(x^-)\right]
	\prod_{\ell\notin L\cup\{i\}}
	\left[S_\ell(x^-)-S_\ell(x)\right].
	\]
	Again, by the distributive law,
	\[
	W_{m,i}(x)
	=
	\prod_{j\ne i}
	\left[
	1-S_j(x^-)+S_j(x^-)-S_j(x)
	\right]
	=
	\prod_{j\ne i}\bigl[1-S_j(x)\bigr].
	\]
	
	The expressions for \(S_{1:m}\) and \(S_{m:m}\) follow immediately from
	Proposition~\ref{prop:W-general}.
\end{proof}

\section{Right-censoring models}\label{sec3}

We now specialize the general framework developed in the previous section to right-censoring mechanisms. In this setting, the observed time corresponds to the minimum order statistic, while the rank indicator identifies the observation responsible for the minimum value. This formulation leads naturally to necessary and sufficient conditions for independence, sufficient conditions based on proportional hazards models, and distribution-free measures of departure from independent censoring.

\subsection{Multiple censoring}

By setting $X_1=Y$, $X_i=C_{i-1}$ for $i=2,\ldots,m$, and taking $i=1$ in Theorem~\ref{thm:kth-order-general}, we obtain a specialization of the general framework to multiple right-censoring models. The following corollary provides a necessary and sufficient condition for the independence between the observed minimum and the associated censoring indicator. An illustrative discrete example is presented at the end of this subsection.

\begin{corollary}\label{cor:multiple-censoring}
	Let \(Y,C_1,\ldots,C_{m-1}\) be independent nonnegative random variables, and define
	\[
	Z=\min\{Y,C_1,\ldots,C_{m-1}\},
	\]
	and
	\[
	\delta
	=
	\mathds1_{\{Y\leqslant C_j,\;j=1,\ldots,m-1\}}.
	\]
	
	Let \(\mathbb P^*\) be the probability measure on \(\mathbb R_+\) defined by
	\[
	\mathbb P^*(A)
	=
	\frac{
		\mathbb E\!\left[
		\prod_{j=1}^{m-1}S_{C_j}(Y^-)
		\mathds1_{\{Y\in A\}}
		\right]
	}{
		\mathbb E\!\left[
		\prod_{j=1}^{m-1}S_{C_j}(Y^-)
		\right]
	},
	\quad
	A\in\mathcal B(\mathbb R_+),
	\]
	and let \(S^*(z)=1-\mathbb P^*((-\infty,z])\) denote its survival function. We have
	\[
	S^*(z)
	=
	S_Y(z)\prod_{j=1}^{m-1}S_{C_j}(z),
	\quad
	z\geqslant 0,
	\]
	if and only if \(Z\) and \(\delta\) are independent.
\end{corollary}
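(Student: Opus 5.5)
The plan is to obtain the corollary directly from Theorem~\ref{thm:kth-order-general} with $k=1$, using the explicit weighting function in Proposition~\ref{cor:min-max-general}. Set $X_1=Y$ and $X_{j+1}=C_j$ for $j=1,\ldots,m-1$, and take $i=1$. These variables are independent and nonnegative, and $Z=\min\{Y,C_1,\ldots,C_{m-1}\}=X_{1:m}$. So all the structural hypotheses of the theorem hold.

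First I will check that the rank indicator $\delta_1$ of the theorem, with $k=1$, coincides with the $\delta$ of the corollary. For $k=1$ the defining condition reads
\[
\sum_{j\ne1}\mathds 1_{\{X_j<X_1\}}<1\leqslant 1+\sum_{j\ne1}\mathds 1_{\{X_j\leqslant X_1\}}.
\]
The right inequality is automatic. The left one says that no $X_j$ with $j\ne 1$ is strictly smaller than $X_1$, that is, $Y\leqslant C_j$ for every $j$. Hence $\delta_1=\delta$. Ties are handled correctly here: $\delta=1$ whenever $Y$ attains the minimum, even if it is shared.

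Next, Proposition~\ref{cor:min-max-general} gives
\[
W_{1,1}(x)=\prod_{j=1}^{m-1}S_{C_j}(x^-),
\]
so $\mathbb P_{1,1}^*$ is exactly the measure $\mathbb P^*$ of the corollary, and $S_{1,1}^*=S^*$. The same proposition gives $S_{1:m}(z)=S_Y(z)\prod_{j=1}^{m-1}S_{C_j}(z)$. Substituting both identities into the equivalence of Theorem~\ref{thm:kth-order-general} yields the stated characterization.

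The only delicate point is the theorem's positivity hypothesis $\mathbb E[W_{1,1}(Y)]>0$, which the corollary uses implicitly so that $\mathbb P^*$ is well defined. I will note that $\mathbb E[W_{1,1}(Y)]=\mathbb P(\delta=1)$. I will then treat $\mathbb P(\delta=1)>0$ as a standing assumption, already built into the definition of $\mathbb P^*$ as a quotient. Under that assumption the argument is otherwise a direct substitution, with no real obstacle.
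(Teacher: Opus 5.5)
Your proposal is correct and follows the paper's approach: the paper also proves this corollary by specializing Theorem~\ref{thm:kth-order-general} to \(k=1\), \(i=1\), \(X_1=Y\), \(X_{j+1}=C_j\), and substituting the expressions for \(W_{1,1}\) and \(S_{1:m}\) from Proposition~\ref{cor:min-max-general}. Your explicit check that \(\delta_1=\delta\) (including ties) and your note that \(\mathbb E[W_{1,1}(Y)]=\mathbb P(\delta=1)>0\) is implicitly assumed are details the paper leaves unstated.
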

\begin{proof}
    The proof follows by combining Proposition \ref{cor:min-max-general} with Theorem \ref{thm:kth-order-general}.
\end{proof}

%

\begin{example}[Multiple geometric censoring]
Let \(Y,C_1,\ldots,C_{m-1}\) be independent geometric random variables on
\(\{0,1,2,\ldots\}\) with parameters \(p_Y,p_1,\ldots,p_{m-1}\), respectively. Their survival functions are
\[
S_Y(k)=q_Y^{k+1},
\quad
S_{C_j}(k)=q_j^{k+1},
\]
where \(q_Y=1-p_Y\) and \(q_j=1-p_j\).

By Corollary~\ref{cor:multiple-censoring},
\[
\mathbb P^*(\{k\})
\propto
p_Yq_Y^k\prod_{j=1}^{m-1}q_j^k
=
p_Y\theta^k,
\quad
\theta=q_Y\prod_{j=1}^{m-1}q_j,
\]
so that
\[
\mathbb P^*(\{k\})
=(1-\theta)\theta^k,
\quad
k=0,1,\ldots,
\]
that is, \(Y\) remains geometrically distributed under \(\mathbb P^*\). Consequently,
\[
S^*(k)
=
\theta^{k+1}
=
S_Y(k)\prod_{j=1}^{m-1}S_{C_j}(k),
\]
which is precisely the condition of Corollary~\ref{cor:multiple-censoring}. Therefore,
\[
Z=\min\{Y,C_1,\ldots,C_{m-1}\}
\]
is independent of
\[
\delta=\mathds1_{\{Y\leqslant C_j,\;j=1,\ldots,m-1\}},
\]
or equivalently,
\[
\mathbb P(\delta=1\mid Z=k)
=
\mathbb P(\delta=1),
\quad
k=0,1,\ldots.
\]
\end{example}

\subsection{Proportional hazards models}\label{sec5}

The characterization obtained for right-censoring is now applied to
identify broad classes of continuous distributions for which the observed
time and the censoring indicator are automatically independent. This is
achieved under the proportional hazards assumption, leading to a simple
and easily verifiable sufficient condition that encompasses several
classical lifetime models.

\begin{theorem}[Proportional hazards]\label{thm:multiple-PH}
	Let \(Y,C_1,\ldots,C_{m-1}\) be independent absolutely continuous nonnegative random variables, and define
	\[
	Z=\min\{Y,C_1,\ldots,C_{m-1}\},
	\quad
	\delta
	=
	\mathds{1}_{\{Y\leqslant C_j,\;j=1,\ldots,m-1\}}.
	\]
	
	Assume that, for each \(j=1,\ldots,m-1\),
	\[
	h_{C_j}(t)
	=
	\alpha_j h_Y(t),
	\quad
	t\geqslant 0,
	\]
	where \(h\) denotes the hazard rate and \(\alpha_j>0\).
	
	Then \(Z\) and \(\delta\) are independent.
\end{theorem}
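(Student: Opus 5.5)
The plan is to verify the criterion of Corollary~\ref{cor:multiple-censoring}, i.e.\ to show that $S^*(z)=S_Y(z)\prod_{j=1}^{m-1}S_{C_j}(z)$ for all $z\geqslant 0$. Since all variables are absolutely continuous, the survival functions are continuous, so $S_{C_j}(Y^-)=S_{C_j}(Y)$. The proportional hazards assumption $h_{C_j}=\alpha_j h_Y$ integrates to a cumulative hazard relation $H_{C_j}(t)=\alpha_j H_Y(t)$, which gives $S_{C_j}(t)=S_Y(t)^{\alpha_j}$. Setting $\alpha=\sum_{j=1}^{m-1}\alpha_j$, the weight in Corollary~\ref{cor:multiple-censoring} becomes $\prod_j S_{C_j}(Y)=S_Y(Y)^{\alpha}$.

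Next I compute both sides. Write $f_Y=h_Y S_Y$ for the density of $Y$. The numerator for $A=(z,\infty)$ is
\[
\mathbb E\!\left[S_Y(Y)^{\alpha}\mathds 1_{\{Y>z\}}\right]=\int_z^\infty S_Y(t)^{\alpha} f_Y(t)\,dt.
\]
Since $S_Y$ is absolutely continuous with derivative $-f_Y$ almost everywhere, the substitution $u=S_Y(t)$ turns this into
\[
\int_0^{S_Y(z)}u^{\alpha}\,du=\frac{S_Y(z)^{\alpha+1}}{\alpha+1},
\]
using $S_Y(\infty)=0$. Taking $z=0$ gives the denominator $1/(\alpha+1)$, because $S_Y(0)=1$ for an absolutely continuous nonnegative variable. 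Hence
\[
S^*(z)=S_Y(z)^{\alpha+1}=S_Y(z)\prod_{j=1}^{m-1}S_Y(z)^{\alpha_j}=S_Y(z)\prod_{j=1}^{m-1}S_{C_j}(z),
\]
so Corollary~\ref{cor:multiple-censoring} yields independence. The positivity condition $\mathbb E[W_{1,1}(Y)]=1/(\alpha+1)>0$ holds automatically.

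The main obstacle is justifying $S_{C_j}=S_Y^{\alpha_j}$ and the change of variables rigorously. Hazard rates are only defined where $S_Y>0$. I would work on $[0,\tau)$ with $\tau=\sup\{t:S_Y(t)>0\}$ and write $S_Y=\exp(-H_Y)$ there. The relation $H_{C_j}=\alpha_j H_Y$ also forces the $C_j$ to have the same support endpoint. Beyond $\tau$ both sides vanish, so the identity extends trivially. The substitution step can be handled via the chain rule for the absolutely continuous, monotone function $t\mapsto S_Y(t)^{\alpha+1}$, whose derivative is $-(\alpha+1)S_Y^{\alpha}f_Y$ almost everywhere.
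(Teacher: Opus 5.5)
Your proof is correct and follows the paper's route. You reduce to $S_{C_j}=S_Y^{\alpha_j}$, evaluate the $\mathbb P^*$-integrals via the substitution $u=S_Y(t)$ to get $S^*=S_Y^{1+\alpha}=S_Y\prod_{j}S_{C_j}$, and invoke Corollary~\ref{cor:multiple-censoring}; the only differences are that the paper first normalizes the density $f^*$ and recognizes it as $-\frac{{\rm d}}{{\rm d}t}S_Y^{1+\alpha}$ rather than integrating the numerator directly, and that your remarks on $S_{C_j}(Y^-)=S_{C_j}(Y)$ and on the support endpoint $\tau$ make explicit details the paper leaves implicit.
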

\begin{proof}
	Since
	$
	h_{C_j}(t)=\alpha_jh_Y(t),
	$
	we have
	$
	S_{C_j}(t)=S_Y(t)^{\alpha_j},
	\
	j=1,\ldots,m-1.
	$
	Hence
	\[
	\prod_{j=1}^{m-1}S_{C_j}(t)
	=
	S_Y(t)^{\alpha},
	\quad
	\alpha=\sum_{j=1}^{m-1}\alpha_j.
	\]
	Therefore,
	\[
	f^*(t)
	=
	\frac{h_Y(t)S_Y(t)^{1+\alpha}}
	{\mathbb E[S_Y(Y)^\alpha]}.
	\]
	Moreover,
	\[
	\mathbb E[S_Y(Y)^\alpha]
	=
	\int_0^\infty h_Y(t)S_Y(t)^{1+\alpha}{\rm d}t
	=
	\int_0^1u^\alpha{\rm d}u
	=
	\frac1{1+\alpha},
	\]
	where we used the change of variable \(u=S_Y(t)\). Thus,
	\[
	f^*(t)
	=
	(1+\alpha)h_Y(t)S_Y(t)^{1+\alpha}
	=
	-\frac{{\rm d}}{{\rm d}t}S_Y(t)^{1+\alpha},
	\]
	which implies
	\[
	S^*(t)
	=
	S_Y(t)^{1+\alpha}
	=
	S_Y(t)\prod_{j=1}^{m-1}S_{C_j}(t)
	=
	S_Z(t).
	\]
	The result follows from Corollary~\ref{cor:multiple-censoring}.
\end{proof}

Theorem~\ref{thm:multiple-PH} applies to any family of absolutely continuous
distributions that is closed under proportional hazards. Table~\ref{tab:PH_models}
lists several common examples. In each case, if \(Y\) and
\(C_1,\ldots,C_{m-1}\) belong to the same family and share the common
shape (or baseline) parameters indicated in the last column, then
\[
h_{C_j}(t)=\alpha_jh_Y(t),
\quad
\alpha_j>0,
\]
where \(\alpha_j\) is simply the ratio of the corresponding scale (or rate)
parameters.
\begin{table}[H]
	\centering
	\caption{Continuous families satisfying the assumptions of Theorem~\ref{thm:multiple-PH}.}
	\label{tab:PH_models}
	\renewcommand{\arraystretch}{1.25}
		\resizebox{\textwidth}{!}{%
	\begin{tabular}{llll}
		\toprule
		\textbf{Family}
		&
		\textbf{Survival function \(S(t)\)}
		&
		\textbf{Hazard rate \(h(t)\)}
		&
		\textbf{Common parameters}
		\\
		\midrule
		
		Exponential
		&
		\(\exp(-\lambda t)\)
		&
		\(\lambda\)
		&
		None
		\\
		
		Weibull
		&
		\(\exp(-\lambda t^\beta)\)
		&
		\(\lambda\beta t^{\beta-1}\)
		&
		Shape parameter \(\beta\)
		\\
		
		Rayleigh
		&
		\(\exp(-\lambda t^2)\)
		&
		\(2\lambda t\)
		&
		None
		\\
		
		Gompertz
		&
		\(\exp\!\left\{-\dfrac{\lambda}{\gamma}
		\left({\rm e}^{\gamma t}-1\right)\right\}\)
		&
		\(\lambda {\rm e}^{\gamma t}\)
		&
		Shape parameter \(\gamma\)
		\\
		
		Lomax (Pareto II)
		&
		\((1+t/\sigma)^{-\kappa}\)
		&
		\(\dfrac{\kappa}{\sigma+t}\)
		&
		Scale parameter \(\sigma\)
		\\
		
		Burr XII
		&
		\((1+t^c)^{-k}\)
		&
		\(\dfrac{kct^{c-1}}{1+t^c}\)
		&
		Shape parameter \(c\)
		\\
		
		General Cox PH model
		&
		\(\exp\{-\theta H_0(t)\}\)
		&
		\(\theta h_0(t)\)
		&
		Baseline hazard \(h_0\)
		\\
		
		\bottomrule
	\end{tabular}
}
\end{table}
\begin{remark}
	For every family in Table~\ref{tab:PH_models}, let \(Y\) have parameter
	\(\theta_Y\) and \(C_j\) have parameter \(\theta_j\), while sharing the
	common parameters listed in the last column. Then
	\[
	h_{C_j}(t)
	=
	\alpha_j h_Y(t),
	\quad
	\alpha_j=\frac{\theta_j}{\theta_Y},
	\]
	so that the assumptions of Theorem~\ref{thm:multiple-PH} are satisfied.
	For example,
	$
	\alpha_j=
	{\lambda_j}/{\lambda_Y}
	$
	for the exponential, Weibull, Rayleigh and Gompertz families,
	$
	\alpha_j=
	{\kappa_j}/{\kappa_Y}
	$
	for the Lomax family, and
	$
	\alpha_j=
	{k_j}/{k_Y}
	$
	for the Burr XII family.
\end{remark}

The following proposition provides a complete characterization of the independence between the observed time $Z$ and the censoring indicator $\delta$ in the classical right-censoring model. It shows that this independence is equivalent to the proportional hazards assumption, thereby establishing a direct probabilistic characterization of proportional hazards.

\begin{proposition}[Characterization by proportional hazards]
\label{thm:PH-characterization}
Let \(Y\) and \(C\) be independent absolutely continuous nonnegative
random variables, and define
\[
Z=\min\{Y,C\},
\quad
\delta=\mathds{1}_{\{Y\leqslant C\}}.
\]
Then \(Z\) and \(\delta\) are independent if and only if there exists a
constant \(\alpha>0\) such that
\[
h_C(t)=\alpha h_Y(t),
\quad t\geqslant0.
\]
\end{proposition}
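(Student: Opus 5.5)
The sufficiency direction is the case \(m=2\) of Theorem~\ref{thm:multiple-PH}, so I only need to prove necessity. The plan is to use Corollary~\ref{cor:multiple-censoring} with \(m=2\). It says that independence of \(Z\) and \(\delta\) is equivalent to
\[
S^*(z)=S_Y(z)S_C(z),\quad z\geqslant 0,
\]
where \(\mathbb P^*\) has density \(f^*(t)=f_Y(t)S_C(t)/p\) and \(p=\mathbb E[S_C(Y)]=\mathbb P(\delta=1)>0\). Since \(C\) is continuous, \(S_C(t^-)=S_C(t)\). I will treat \(p\in(0,1)\) as the nondegenerate case; if \(p=0\) or \(p=1\), one of the hazards vanishes on the relevant support, and I would either dismiss this case or note that the statement implicitly excludes it.

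First I rewrite the identity in integral form:
\[
\int_z^\infty f_Y(t)S_C(t)\,{\rm d}t=p\,S_Y(z)S_C(z).
\]
By symmetry, the same reasoning applied to \(\delta'=1-\delta=\mathds 1_{\{C<Y\}}\) gives
\[
\int_z^\infty f_C(t)S_Y(t)\,{\rm d}t=(1-p)\,S_Y(z)S_C(z).
\]
This holds because \(Z\) is also independent of \(1-\delta\), and ties have probability zero. Both left-hand sides are absolutely continuous in \(z\), so I differentiate almost everywhere:
\[
f_Y(z)S_C(z)=p\,\bigl[f_Y(z)S_C(z)+f_C(z)S_Y(z)\bigr].
\]
On the set \(\{z: S_Y(z)S_C(z)>0\}\), dividing by \(S_Y(z)S_C(z)\) gives \((1-p)h_Y(z)=p\,h_C(z)\). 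Hence \(h_C=\alpha h_Y\) with \(\alpha=(1-p)/p>0\). The second identity is not strictly needed, since differentiating the first one already yields this relation.

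The main obstacle is the set where \(S_Y(z)S_C(z)=0\), together with the meaning of ``\(h_C=\alpha h_Y\) for all \(t\)'' when the hazards are defined only almost everywhere. I would handle this as follows.

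1. From \((1-p)h_Y=p\,h_C\) I integrate the cumulative hazards on \([0,\tau)\), where \(\tau=\inf\{z: S_Y(z)S_C(z)=0\}\). This gives \(S_C=S_Y^{\alpha}\) on \([0,\tau)\).
2. By continuity, both survival functions then vanish at \(\tau\) simultaneously, so beyond \(\tau\) both hazards are undefined or zero. The relation therefore holds trivially there, with the convention that hazards are taken on the common support.
3. I interpret the equality as holding for almost every \(t\) (for versions of the densities), and if continuous densities are assumed, for every \(t\) in the support.

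With these points settled, the converse direction is complete.
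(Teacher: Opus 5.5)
Your proposal is correct and follows the paper's proof: apply Corollary~\ref{cor:multiple-censoring} with $m=2$, differentiate $\int_z^\infty f_Y(t)S_C(t)\,{\rm d}t=p\,S_Y(z)S_C(z)$, and divide by $S_YS_C$ to obtain $h_C=\alpha h_Y$ with $\alpha=(1-p)/p=1/\mathbb E[S_C(Y)]-1$. Your extra care addresses points the paper passes over silently. Excluding $p\in\{0,1\}$ is in fact necessary, not optional: there $\delta$ is a.s.\ constant, so independence holds trivially while no $\alpha>0$ works (e.g.\ $Y\sim U[0,1]$, $C\sim U[2,3]$). Your handling of the region beyond $\tau$ and your a.e.\ reading of the hazard identity correctly supply what ``for all $t\geqslant0$'' must mean.
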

\begin{proof}
The sufficiency follows immediately from
Theorem~\ref{thm:multiple-PH}.

Conversely, suppose that \(Z\) and \(\delta\) are independent.
By Corollary~\ref{cor:multiple-censoring},
\[
S^*(t)=S_Y(t)S_C(t),
\quad t\geqslant 0,
\]
where
$
S^*(t)
=
{
\int_t^\infty S_C(x)f_Y(x){\rm d}x}/
{
\int_0^\infty S_C(x)f_Y(x){\rm d}x}.
$
Differentiating both sides yields
\[
\frac{S_C(t)f_Y(t)}
{\mathbb E[S_C(Y)]}
=
S_C(t)f_Y(t)+S_Y(t)f_C(t).
\]
Dividing by \(S_Y(t)S_C(t)\) gives 
$
h_C(t)=\alpha h_Y(t),
$
where
$
\alpha
=
\{1/{\mathbb E[S_C(Y)]}\}-1
>0
$
is constant. This completes the proof.
\end{proof}

\subsection{Measures of departure from independent right-censoring}\label{perfMeasures}

Corollary~\ref{cor:multiple-censoring} naturally gives rise to distribution-free measures of departure from independent right-censoring. These are defined, respectively, as the Kolmogorov and normalized Wasserstein distances between the conditional distribution induced by $\mathbb{P}^*$ and the unconditional distribution of the observed minimum. Specifically,
\[
K_d
=
\sup_{z\geqslant 0}
\left|
\frac{
\displaystyle
\int_{z}^\infty
\prod_{j=1}^{m-1}S_{C_j}(y^-)\,
{\rm d}F_Y(y)
}
{
\displaystyle
\int_0^\infty
\prod_{j=1}^{m-1}S_{C_j}(y^-)\,
{\rm d}F_Y(y)
}
-
S_Y(z)\prod_{j=1}^{m-1}S_{C_j}(z)
\right|
\]
and
\[
W_d^*
=
\frac{
\displaystyle
\int_0^\infty
\left|
\frac{
\displaystyle
\int_{z}^\infty
\prod_{j=1}^{m-1}S_{C_j}(y^-)\,
{\rm d}F_Y(y)
}
{
\displaystyle
\int_0^\infty
\prod_{j=1}^{m-1}S_{C_j}(y^-)\,
{\rm d}F_Y(y)
}
-
S_Y(z)\prod_{j=1}^{m-1}S_{C_j}(z)
\right|
{\rm d}z
}
{
\displaystyle
\max\left\{
\frac{
\displaystyle
\int_0^\infty
y\prod_{j=1}^{m-1}S_{C_j}(y^-)\,
{\rm d}F_Y(y)
}
{
\displaystyle
\int_0^\infty
\prod_{j=1}^{m-1}S_{C_j}(y^-)\,
{\rm d}F_Y(y)
},
\;
\int_0^\infty
S_Y(z)\prod_{j=1}^{m-1}S_{C_j}(z)\,
{\rm d}z
\right\}
}.
\]

Both measures satisfy
$
0\leqslant K_d, W_d^*\leqslant 1,
$
and
$
K_d=0
\Longrightarrow
W_d^*=0,
$
which, by Corollary~\ref{cor:multiple-censoring}, is equivalent to independent right-censoring. Thus, $K_d$ measures the largest discrepancy between the conditional survival function under $\mathbb{P}^*$ and the survival function of the observed minimum, whereas $W_d^*$ quantifies the overall departure through the corresponding normalized Wasserstein distance of order one.

\section{Left-censoring models}\label{sec4}

We next consider the dual problem associated with left-censoring mechanisms. In contrast to the previous section, the observed time is defined through the maximum order statistic, and the rank indicator identifies the observation attaining the largest value. Building on the general results established in Section \ref{sec2}, we derive characterizations of independence, establish sufficient conditions based on proportional reversed hazards models, and introduce corresponding measures of departure from independence.

\subsection{Multiple censoring}

By considering the maximum order statistic and setting $X_1=Y$, $X_i=C_{i-1}$ for $i=2,\ldots,m$, and $i=1$ in Theorem~\ref{thm:kth-order-general}, we obtain the corresponding characterization for multiple left-censoring models. The following corollary provides a necessary and sufficient condition for the independence between the observed maximum and the associated censoring indicator. An illustrative example is presented at the end of this subsection.

\begin{corollary}\label{cor:multiple-left-censoring}
	Let \(Y,C_1,\ldots,C_{m-1}\) be independent nonnegative random variables, and define
	\[
	Z=\max\{Y,C_1,\ldots,C_{m-1}\},
	\]
	and
	\[
	\delta
	=
	\mathds1_{\{Y\geqslant C_j,\;j=1,\ldots,m-1\}}.
	\]
	
	Let \(\mathbb P^*\) be the probability measure on \(\mathbb R_+\) defined by
	\[
	\mathbb P^*(A)
	=
	\frac{
		\mathbb E\!\left[
		\prod_{j=1}^{m-1}\bigl(1-S_{C_j}(Y)\bigr)
		\mathds1_{\{Y\in A\}}
		\right]
	}{
		\mathbb E\!\left[
		\prod_{j=1}^{m-1}\bigl(1-S_{C_j}(Y)\bigr)
		\right]
	},
	\quad
	A\in\mathcal B(\mathbb R_+),
	\]
	and let \(S^*(z)=1-\mathbb P^*((-\infty,z])\) denote its survival function.
	
	We have
	\[
	S^*(z)
	=
	1-
	\bigl[1-S_Y(z)\bigr]
	\prod_{j=1}^{m-1}
	\bigl[1-S_{C_j}(z)\bigr],
	\quad
	z\geqslant 0,
	\]
	if and only if \(Z\) and \(\delta\) are independent.
\end{corollary}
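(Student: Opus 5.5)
The plan is to specialize Theorem~\ref{thm:kth-order-general} with $k=m$ and $i=1$, mirroring the right-censoring case. We set $X_1=Y$ and $X_{j+1}=C_j$ for $j=1,\ldots,m-1$. Then $Z=X_{m:m}$ is the maximum. The first step is to check that the rank indicator $\delta_1$ of the theorem coincides with $\delta=\mathds1_{\{Y\geqslant C_j,\;j=1,\ldots,m-1\}}$. For $k=m$, the defining condition reads
\[
\sum_{j\ne 1}\mathds1_{\{X_j<X_1\}}<m\leqslant 1+\sum_{j\ne 1}\mathds1_{\{X_j\leqslant X_1\}}.
\]
The left inequality is automatic, since the sum has only $m-1$ terms. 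The right inequality holds exactly when every indicator $\mathds1_{\{X_j\leqslant X_1\}}$ equals one, that is, when $Y\geqslant C_j$ for all $j$. Hence $\delta_1=\delta$.

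The second step is to identify the weighting function. By Proposition~\ref{cor:min-max-general},
\[
W_{m,1}(x)=\prod_{j=1}^{m-1}\bigl[1-S_{C_j}(x)\bigr].
\]
So the measure $\mathbb P^*_{m,1}$ of Theorem~\ref{thm:kth-order-general} is exactly the $\mathbb P^*$ in the statement. By the same proposition,
\[
S_{m:m}(z)=1-\bigl[1-S_Y(z)\bigr]\prod_{j=1}^{m-1}\bigl[1-S_{C_j}(z)\bigr].
\]
Substituting these into the equivalence $S^*_{m,1}=S_{m:m}\iff Z\perp\delta_1$ gives the corollary.

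The only point that needs care is the hypothesis $\mathbb E[W_{m,1}(Y)]>0$, which is needed for $\mathbb P^*$ to be well defined. I would state it as implicit in the statement: the definition of $\mathbb P^*$ has this quantity as its denominator. Equivalently, it means $\mathbb P(\delta=1)>0$, because $\mathbb E[W_{m,1}(Y)]=\mathbb P(\delta=1)$, as shown in the proof of the theorem. Beyond this, I expect no real obstacle; the corollary is a direct combination of Proposition~\ref{cor:min-max-general} with Theorem~\ref{thm:kth-order-general}.
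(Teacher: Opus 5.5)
Your proposal is correct and follows the route the paper itself indicates: specialize Theorem~\ref{thm:kth-order-general} with $k=m$, $i=1$, $X_1=Y$, $X_{j+1}=C_j$, and read off $W_{m,1}$ and $S_{m:m}$ from Proposition~\ref{cor:min-max-general}. The paper gives no separate proof of this corollary, so your explicit check that $\delta_1=\delta$, and your remark that $\mathbb E[W_{m,1}(Y)]=\mathbb P(\delta=1)>0$ must be assumed, supply details it leaves implicit.
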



\begin{example}[Multiple left-censoring under a discrete proportional reversed hazards family]
Let \(F_0\) be the distribution function of a nonnegative integer-valued random variable, and define
\[
F_\theta(k)=F_0(k)^\theta,
\quad
k\in\mathbb N_0,\ \theta>0.
\]
Suppose that
$
Y\sim F_{\theta_Y}
$
and
$
C_j\sim F_{\theta_j},
$
\(j=1,\ldots,m-1\), are mutually independent. Then
\[
F_Y(k)\prod_{j=1}^{m-1}F_{C_j}(k)
=
F_0(k)^{\theta_Y+\sum_{j=1}^{m-1}\theta_j}.
\]
Moreover, under the probability measure \(\mathbb P^*\) of
Corollary~\ref{cor:multiple-left-censoring},
\[
F^*(k)
=
F_0(k)^{\theta_Y+\sum_{j=1}^{m-1}\theta_j},
\]
so that
\[
S^*(k)
=
1-
F_Y(k)\prod_{j=1}^{m-1}F_{C_j}(k)
=
1-
\bigl[1-S_Y(k)\bigr]
\prod_{j=1}^{m-1}
\bigl[1-S_{C_j}(k)\bigr].
\]
Hence, the assumptions of Corollary~\ref{cor:multiple-left-censoring} are satisfied, implying that
\[
Z=\max\{Y,C_1,\ldots,C_{m-1}\}
\]
is independent of
\[
\delta=\mathds1_{\{Y\geqslant C_j,\;j=1,\ldots,m-1\}},
\]
or equivalently,
\[
\mathbb P(\delta=1\mid Z=k)
=
\mathbb P(\delta=1),
\quad
k\in\mathbb N_0.
\]
\end{example}

\subsection{Proportional reversed hazards models}\label{sec6}

We next consider the dual setting based on proportional reversed hazards
models. By combining the characterization for the maximum order statistic
with the reversed hazards assumption, we obtain sufficient conditions
ensuring independence in left-censoring models. The results parallel
those established in the previous section and cover several familiar
distribution families.

\begin{theorem}[Proportional reversed hazards]
	\label{thm:multiple-RPH}
	Let \(Y,C_1,\ldots,C_{m-1}\) be independent absolutely continuous nonnegative random variables, and define
	\[
	Z=\max\{Y,C_1,\ldots,C_{m-1}\},
	\quad
	\delta
	=
	\mathds{1}_{\{Y\geqslant C_j,\;j=1,\ldots,m-1\}}.
	\]
	
	Assume that, for each \(j=1,\ldots,m-1\),
	\[
	r_{C_j}(t)
	=
	\beta_j r_Y(t),
	\quad
	t\geqslant 0,
	\]
	where \(r\) denotes the reversed hazard rate and \(\beta_j>0\).
	
	Then \(Z\) and \(\delta\) are independent.
\end{theorem}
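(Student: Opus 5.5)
The plan is to mirror the proof of Theorem~\ref{thm:multiple-PH}, using Corollary~\ref{cor:multiple-left-censoring} in place of Corollary~\ref{cor:multiple-censoring}.

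\textbf{Step 1: express everything through $F_Y$.} Write $F=1-S$ and recall that the reversed hazard rate is $r(t)=f(t)/F(t)=\frac{{\rm d}}{{\rm d}t}\log F(t)$. Integrating $r_{C_j}=\beta_j r_Y$ from $t$ to $\infty$ and using $F(\infty)=1$ for both variables gives
\[
\log F_{C_j}(t)=\beta_j\log F_Y(t), \qquad\text{that is,}\qquad F_{C_j}(t)=F_Y(t)^{\beta_j}.
\]
Integrating from $\infty$ rather than from $0$ avoids any issue with $F(0)=0$. Consequently
\[
\prod_{j=1}^{m-1}\bigl(1-S_{C_j}(t)\bigr)=F_Y(t)^{\beta},\qquad \beta=\sum_{j=1}^{m-1}\beta_j .
\]

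\textbf{Step 2: compute the reweighted law.} Under $\mathbb P^*$ of Corollary~\ref{cor:multiple-left-censoring}, the variable $Y$ has density proportional to $f_Y(t)F_Y(t)^{\beta}$. Substituting $u=F_Y(t)$ gives the normalizing constant
\[
\mathbb E[F_Y(Y)^{\beta}]=\int_0^\infty f_Y F_Y^{\beta}\,{\rm d}t=\int_0^1u^\beta\,{\rm d}u=\frac{1}{1+\beta}.
\]
Hence $f^*(t)=(1+\beta)f_Y(t)F_Y(t)^\beta=\frac{{\rm d}}{{\rm d}t}F_Y(t)^{1+\beta}$. Integrating from $0$, where $F_Y(0)=0$ since $Y$ is absolutely continuous and nonnegative, yields $F^*(t)=F_Y(t)^{1+\beta}$.

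\textbf{Step 3: conclude.} We now have
\[
S^*(z)=1-F_Y(z)\prod_{j}F_{C_j}(z)=1-\bigl[1-S_Y(z)\bigr]\prod_{j=1}^{m-1}\bigl[1-S_{C_j}(z)\bigr],
\]
and Corollary~\ref{cor:multiple-left-censoring} gives the independence of $Z$ and $\delta$.

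\textbf{Main obstacle.} The only delicate point is Step 1, namely turning the hazard identity into $F_{C_j}=F_Y^{\beta_j}$. Two facts are needed:
\begin{itemize}
\item $F$ is absolutely continuous with $r$ defined where $F>0$. The relation should be read on the region where $F_Y>0$; on $\{F_Y=0\}$ the identity $F_{C_j}=F_Y^{\beta_j}=0$ is implied by the proportionality of the supports' lower endpoints, which is implicit in the hypothesis.
\item The change of variable $u=F_Y(t)$ is valid. This holds because $F_Y$ is absolutely continuous and nondecreasing.
\end{itemize}
Ties occur with probability zero, so $\delta$ is unaffected by the non-strict inequalities.
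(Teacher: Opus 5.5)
Your proposal is correct and follows the paper's proof essentially step for step: you derive $F_{C_j}=F_Y^{\beta_j}$, compute the reweighted density $(1+\beta)f_YF_Y^{\beta}$ via the substitution $u=F_Y(t)$, obtain $F^*=F_Y^{1+\beta}$, and invoke Corollary~\ref{cor:multiple-left-censoring}. Integrating from $\infty$ and checking $F_Y(0)=0$ fill in details the paper leaves implicit. Equality (not ``proportionality'') of the lower support endpoints does not need to be assumed: it follows from the hypothesis, because $\log F_{C_j}\to-\infty$ at its own endpoint while $\beta_j\log F_Y$ would stay finite there.
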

\begin{proof}
	Since
	$
	r_{C_j}(t)=\beta_jr_Y(t),
	$
	we have
	$
	F_{C_j}(t)=F_Y(t)^{\beta_j},
	\
	j=1,\ldots,m-1.
	$
	Hence
	\[
	\prod_{j=1}^{m-1}F_{C_j}(t)
	=
	F_Y(t)^{\beta},
	\quad
	\beta=\sum_{j=1}^{m-1}\beta_j.
	\]
	Therefore,
	\[
	f^*(t)
	=
	\frac{r_Y(t)F_Y(t)^{1+\beta}}
	{\mathbb E[F_Y(Y)^\beta]}.
	\]
	Moreover,
	\[
	\mathbb E[F_Y(Y)^\beta]
	=
	\int_0^\infty r_Y(t)F_Y(t)^{1+\beta}{\rm d}t
	=
	\int_0^1u^\beta{\rm d}u
	=
	\frac1{1+\beta},
	\]
	where we used the change of variable \(u=F_Y(t)\). Thus,
	\[
	f^*(t)
	=
	(1+\beta)r_Y(t)F_Y(t)^{1+\beta}
	=
	\frac{{\rm d}}{{\rm d}t}F_Y(t)^{1+\beta},
	\]
	which implies
	\[
	F^*(t)
	=
	F_Y(t)^{1+\beta}.
	\]
	Equivalently,
	\[
	S^*(t)
	=
	1-F_Y(t)^{1+\beta}
	=
	1-
	F_Y(t)\prod_{j=1}^{m-1}F_{C_j}(t)
	=
	S_Z(t).
	\]
	The result follows from Corollary~\ref{cor:multiple-left-censoring}.
\end{proof}

Theorem~\ref{thm:multiple-RPH} applies to any family of absolutely
continuous distributions that is closed under proportional reversed
hazards. Table~\ref{tab:PRH_models} lists several common examples.
In each case, if \(Y\) and \(C_1,\ldots,C_{m-1}\) belong to the same
family and share the common shape (or baseline) parameters indicated
in the last column, then
\[
r_{C_j}(t)=\beta_jr_Y(t),
\quad
\beta_j>0,
\]
where \(\beta_j\) is the ratio of the corresponding exponent (or shape)
parameters.
\begin{table}[H]
	\centering
	\small
	\caption{Continuous families satisfying the assumptions of Theorem~\ref{thm:multiple-RPH}.}
	\label{tab:PRH_models}
	\renewcommand{\arraystretch}{2.3}
	\resizebox{\textwidth}{!}{%
		\begin{tabular}{llll}
			\toprule
			\textbf{Family}
			&
			\textbf{Distribution function \(F(t)\)}
			&
			\textbf{Reversed hazard rate \(r(t)\)}
			&
			\textbf{Common parameters}
			\\
			\midrule
			
			Power
			&
			\(t^\theta,\quad 0<t<1\)
			&
			\(\dfrac{\theta}{t}\)
			&
			Support \((0,1)\)
			\\
			
			Beta\((\theta,1)\)
			&
			\(t^\theta,\quad 0<t<1\)
			&
			\(\dfrac{\theta}{t}\)
			&
			Second shape parameter fixed at \(1\)
			\\
			
			Pareto I
			&
			\(\left(1-\dfrac{\sigma}{t}\right)^\theta,\quad t>\sigma\)
			&
			\(\dfrac{\theta\sigma}{t(t-\sigma)}\)
			&
			Scale parameter \(\sigma\)
			\\
			
			Exponentiated Exponential
			&
			\(\left(1-{\rm e}^{-\lambda t}\right)^\theta\)
			&
			\(\dfrac{\theta\lambda {\rm e}^{-\lambda t}}
			{1-{\rm e}^{-\lambda t}}\)
			&
			Rate parameter \(\lambda\)
			\\
			
			Exponentiated Weibull
			&
			\(\left(1-{\rm e}^{-\lambda t^\beta}\right)^\theta\)
			&
			\(\dfrac{\theta\lambda\beta t^{\beta-1}
				{\rm e}^{-\lambda t^\beta}}
			{1-{\rm e}^{-\lambda t^\beta}}\)
			&
			Parameters \(\lambda,\beta\)
			\\
			
			Exponentiated Rayleigh
			&
			\(\left(1-{\rm e}^{-\lambda t^2}\right)^\theta\)
			&
			\(\dfrac{2\theta\lambda t
				{\rm e}^{-\lambda t^2}}
			{1-{\rm e}^{-\lambda t^2}}\)
			&
			Parameter \(\lambda\)
			\\
			
			General PRH model
			&
			\(F_0(t)^\theta\)
			&
			\(\theta r_0(t)\)
			&
			Baseline reversed hazard \(r_0\)
			\\
			
			\bottomrule
		\end{tabular}
	}
\end{table}
\begin{remark}
	For every family in Table~\ref{tab:PRH_models}, let \(Y\) have parameter
	\(\theta_Y\) and let \(C_j\) have parameter \(\theta_j\), while sharing the
	common parameters listed in the last column. Then
	\[
	r_{C_j}(t)
	=
	\beta_jr_Y(t),
	\quad
	\beta_j=\frac{\theta_j}{\theta_Y},
	\]
	so that the assumptions of Theorem~\ref{thm:multiple-RPH} are satisfied.
	
	In particular,
	\[
	F_{C_j}(t)
	=
	F_Y(t)^{\beta_j},
	\]
	which follows from integrating the proportional reversed hazards model.
	Consequently,
	\[
	F_Z(t)
	=
	F_Y(t)
	\prod_{j=1}^{m-1}F_{C_j}(t)
	=
	F_Y(t)^{\,1+\sum_{j=1}^{m-1}\beta_j},
	\]
	where
	\[
	Z=\max\{Y,C_1,\ldots,C_{m-1}\},
	\]
	showing that the hypothesis of Theorem~\ref{thm:multiple-RPH} holds for
	all the families listed in Table~\ref{tab:PRH_models}.
\end{remark}

The following result is the dual counterpart of Proposition~\ref{thm:PH-characterization}. It shows that, for $m=2$, the independence between $Z$ and $\delta$ is equivalent to the proportional reversed hazards assumption.

\begin{proposition}[Characterization by proportional reversed hazards]
\label{thm:RPH-characterization}
Let \(Y\) and \(C\) be independent absolutely continuous nonnegative
random variables, and define
\[
Z=\max\{Y,C\},
\quad
\delta=\mathds{1}_{\{Y\geqslant C\}}.
\]
Then \(Z\) and \(\delta\) are independent if and only if there exists a
constant \(\beta>0\) such that
\[
r_C(t)=\beta r_Y(t),
\quad t\geqslant0,
\]
where \(r_Y\) and \(r_C\) denote the reversed hazard rate functions of
\(Y\) and \(C\), respectively.
\end{proposition}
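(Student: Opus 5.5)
Sufficiency follows at once from Theorem~\ref{thm:multiple-RPH} with $m=2$ and $C_1=C$: if $r_C=\beta r_Y$, then $Z$ and $\delta$ are independent. The real work is necessity, and the plan is to mirror the proof of Proposition~\ref{thm:PH-characterization}, replacing survival functions by distribution functions.

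Assume $Z$ and $\delta$ are independent. By Corollary~\ref{cor:multiple-left-censoring} with $m=2$,
\[
S^*(t)=1-F_Y(t)F_C(t),\quad t\geqslant0 .
\]
Because $C$ is absolutely continuous, $1-S_C(Y)=F_C(Y)$, so the measure $\mathbb P^*$ has distribution function
\[
F^*(t)=\frac{\int_0^t F_C(x)f_Y(x)\,{\rm d}x}{\mathbb E[F_C(Y)]},
\]
where $\mathbb E[F_C(Y)]=\mathbb P(C\leqslant Y)>0$. This denominator must be positive for the corollary to apply. If it vanished, $\delta=0$ almost surely and independence would hold trivially, so I will note that this degenerate case is excluded (or treated separately) exactly as in the right-censoring proof. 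The independence condition then reads $F^*(t)=F_Y(t)F_C(t)$.

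Next I differentiate both sides almost everywhere, using absolute continuity of $F_Y$ and $F_C$:
\[
\frac{F_C(t)f_Y(t)}{\mathbb E[F_C(Y)]}=f_Y(t)F_C(t)+F_Y(t)f_C(t).
\]
On the set where $F_Y(t)F_C(t)>0$, I divide by $F_Y(t)F_C(t)$ to obtain $r_C(t)=\beta r_Y(t)$ with
\[
\beta=\frac{1}{\mathbb E[F_C(Y)]}-1 .
\]
The constant $\beta$ is positive because $\mathbb E[F_C(Y)]=\mathbb P(C\leqslant Y)<1$, which holds whenever $\mathbb P(Y<C)>0$.

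The main obstacle is keeping the statement "for all $t\geqslant0$" honest. The derivative identity holds only almost everywhere, and reversed hazards are undefined where $F_Y$ or $F_C$ vanishes. To handle this I would integrate the identity $r_C=\beta r_Y$ back to $\log F_C=\beta\log F_Y+\text{const}$ on the region where both are positive. Letting $t\to\infty$ forces the constant to be zero, so $F_C=F_Y^{\beta}$; this shows the supports' lower ends coincide and gives the proportional reversed hazards relation wherever the rates are defined. In the spirit of the paper's Proposition~\ref{thm:PH-characterization}, I would treat these points briefly, as the authors do.
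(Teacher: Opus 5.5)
Your proof is correct and is essentially the paper's. The paper writes independence directly as $\int_0^z f_Y(t)F_C(t)\,{\rm d}t=\mathbb P(\delta=1)F_Y(z)F_C(z)$, which is your identity $F^*=F_YF_C$ because $\mathbb E[F_C(Y)]=\mathbb P(\delta=1)$; it then differentiates and divides by $F_YF_C$, obtaining the same $\beta=[1-\mathbb P(\delta=1)]/\mathbb P(\delta=1)$. Your attention to the degenerate cases matters more than you suggest: the paper silently assumes $0<\mathbb P(\delta=1)<1$, and without it the ``only if'' direction is false (e.g.\ $Y\sim U(0,1)$, $C\sim U(2,3)$ gives $\delta=0$ a.s.\ with no proportional reversed hazards), so that case must be excluded, not ``treated separately''.
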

\begin{proof}
The sufficiency follows immediately from Theorem~\ref{thm:multiple-RPH}
with \(m=2\).

Conversely, suppose that \(Z\) and \(\delta\) are independent. Then
\[
\mathbb{P}(Z\leqslant z,\delta=1)
=
F_Z(z)\mathbb{P}(\delta=1).
\]
Since
$
\mathbb{P}(Z\leqslant z,\delta=1)
=
\int_0^z f_Y(t)F_C(t){\rm d}t
$
and
$
P(Z\leqslant z)
=
F_Y(z)F_C(z),
$
we obtain
\[
\int_0^z f_Y(t)F_C(t){\rm d}t
=
\mathbb{P}(\delta=1)F_Y(z)F_C(z).
\]
Differentiating both sides with respect to \(z\) yields
\[
f_Y(z)F_C(z)
=
\mathbb{P}(\delta=1)
\bigl[f_Y(z)F_C(z)+f_C(z)F_Y(z)\bigr].
\]
Dividing both sides by \(F_Y(z)F_C(z)\) gives
$r_C(z)=\beta r_Y(z),$
where $\beta=[{1-\mathbb{P}(\delta=1)}]/{\mathbb{P}(\delta=1)}$.
This completes the proof.
\end{proof}

\subsection{Measures of departure from independent left-censoring}

Corollary~\ref{cor:multiple-left-censoring} also leads naturally to distribution-free measures that quantify the degree of departure from independent left-censoring. These measures are obtained by comparing the conditional distribution under $\mathbb{P}^*$ with the distribution of the observed maximum through the Kolmogorov and normalized Wasserstein distances. Specifically,
\[
K_d
=
\sup_{z\geqslant 0}
\left|
\frac{
\displaystyle
\int_z^\infty
\prod_{j=1}^{m-1}\bigl[1-S_{C_j}(y)\bigr]\,
{\rm d}F_Y(y)
}
{
\displaystyle
\int_0^\infty
\prod_{j=1}^{m-1}\bigl[1-S_{C_j}(y)\bigr]\,
{\rm d}F_Y(y)
}
-
\left[
1-
\bigl[1-S_Y(z)\bigr]
\prod_{j=1}^{m-1}
\bigl[1-S_{C_j}(z)\bigr]
\right]
\right|
\]
and
\[
W_d^*
=
\frac{
\displaystyle
\int_0^\infty
\left|
\frac{
\displaystyle
\int_z^\infty
\prod_{j=1}^{m-1}\bigl[1-S_{C_j}(y)\bigr]\,
{\rm d}F_Y(y)
}
{
\displaystyle
\int_0^\infty
\prod_{j=1}^{m-1}\bigl[1-S_{C_j}(y)\bigr]\,
{\rm d}F_Y(y)
}
-
\left[
1-
\bigl[1-S_Y(z)\bigr]
\prod_{j=1}^{m-1}
\bigl[1-S_{C_j}(z)\bigr]
\right]
\right|
{\rm d}z
}
{
\displaystyle
\max\left\{
\frac{
\displaystyle
\int_0^\infty
y\prod_{j=1}^{m-1}\bigl[1-S_{C_j}(y)\bigr]\,
{\rm d}F_Y(y)
}
{
\displaystyle
\int_0^\infty
\prod_{j=1}^{m-1}\bigl[1-S_{C_j}(y)\bigr]\,
{\rm d}F_Y(y)
},
\;
\int_0^\infty
\left[
1-
\bigl[1-S_Y(z)\bigr]
\prod_{j=1}^{m-1}
\bigl[1-S_{C_j}(z)\bigr]
\right]
{\rm d}z
\right\}
}.
\]

Both measures take values in the unit interval,
$
0\leqslant K_d, W_d^*\leqslant 1,
$
and satisfy
$
K_d=0
\Longrightarrow
W_d^*=0,
$
which, by Corollary~\ref{cor:multiple-left-censoring}, characterizes independent left-censoring. Moreover, $K_d$ captures the largest discrepancy between the conditional distribution under $\mathbb{P}^*$ and the distribution of the observed maximum, whereas $W_d^*$ provides a global measure of this discrepancy through the normalized Wasserstein distance of order one.

\section{Numerical Illustration}\label{sec7}

This section presents a numerical investigation of the theoretical results established throughout the paper. Using Monte Carlo simulations, we evaluate the proposed characterizations under several right- and left-censoring scenarios and examine the performance of the associated distance measures. The results provide additional insight into the conditions under which independence holds and highlight the fundamental differences between the probabilistic structures induced by the minimum and maximum operators.

To illustrate the theoretical findings, this section presents a Monte Carlo (MC) simulation study assessing the proposed characterizations under different sampling scenarios. {To this end, the independence between $Z$ and $\delta$ can be characterized using the result established in Subsection~\ref{perfMeasures}. 

Without loss of generality, consider the case where $m=5$, in which $Y_i$ denotes the failure time and $C_{ij}$, for $j\in \{1,2,3,4\}$ and $i \in \{1,2,\cdots,n\}$, represent the censoring times. In addition, the failure and censoring times were generated from Weibull and Exponential distributions, such that $Y_i \sim \mathrm{Weibull}(\alpha, \phi)$, where $\alpha>0$ and $\phi>0$ are the shape and scale parameters, respectively. Let $C_{ij} \sim \exp(\mu_j)$, where $\mu_j > 0$ is the rate parameter. The observed survival times and failure indicators are defined as follows: 

\vspace{-0.2cm}
\begin{itemize}
\item For the right-censoring mechanism: $Z_i = \min\left\{Y_i, C_i\right\}$ and $\delta_i = \mathds{1}_{\{Y_i \leqslant C_i\}}$, where $C_i = \min\left\{C_{i1}, C_{i2}, C_{i3}, C_{i4}\right\}$ is the overall censoring time.\vspace{-0.1cm}

\item For the left-censoring mechanism: $Z_i = \max\left\{Y_i, C_i\right\}$ and $\delta_i = \mathds{1}_{\{Y_i \geqslant C_i\}}$, where $C_i = \max\left\{C_{i1}, C_{i2}, C_{i3}, C_{i4}\right\}$ is the overall censoring time.
\end{itemize}

The means of these random variables are given by $\mathbb{E}[Y_i]=\phi^{-1}\Gamma(1+1/\alpha)$ and $\mathbb{E}[C_{ij}] = 1/\mu_j$, respectively.

The independence between $Z_i$ and $\delta_i$ was assessed by evaluating the proximity between the conditional survival curves $S_{Z_i|\delta_i = 1}(z_i)$ and $S_{Z_i|\delta_i = 0}(z_i)$ across $R=100$ MC replications. To examine the effectiveness of the proposed methodology, six scenarios were considered, each characterized by different parameter values: $\alpha$, $\phi$, and $\mu_G = \sum_{j=1}^{4} \mu_j$. For each scenario, $R$ datasets were generated, each with a fixed sample size of $n=1000$. In addition, the following proximity measures were computed: $(i)$ $\mathbb{E}[Z_i|\delta_i=\kappa]$, the conditional expectation, for $\kappa \in \{0,1\}$ and $\forall i \in \{1,2,\ldots,n\}$;  $(ii)$ $W_d$, the Wasserstein distance; $(iii)$ $W_d^*$, the normalized Wasserstein distance; and $(iv)$ $K_d$, the Kolmogorov distance. Additionally, the probability that an individual experiences the event of interest, $\mathbb{P}(\delta_i = 1)$, was also reported. For these similarity measures, values close to zero indicate independence between the two variables. In other words, under this assumption, discrepancies between the conditional distributions (or survival functions) should be negligible, with the corresponding distance measures expected to be close to zero.


The simulation results presented in Table~\ref{TabsimuRes} are highly consistent with the theoretical properties of independence between the variables $Z_i$ and $\delta_i$. However, under the specified distributions, this independence is governed primarily by the type of censoring mechanism, the distribution of the failure time $Y_i$, and, in particular, the value of the Weibull shape parameter $\alpha$.

\begin{table}[H]
\centering
\setlength{\tabcolsep}{0.15cm}
\small
\caption{Monte Carlo simulation results for the main performance measures. Here, $W_d$, $W_d^*$, and $K_d$ denote the Wasserstein distance, the normalized Wasserstein distance, and the Kolmogorov distance, respectively. $\mathbb{P}(\delta_i = 1)$ represents the failure probability, and $\mathbb{E}\left[Z_i \mid \delta_i = \kappa\right]$ for $\kappa=0,1$ and $i = 1, 2, \dots, n$ denotes the conditional expectation.}
\label{TabsimuRes}
\vspace{0.15cm}
\begin{tabular}{ccccccccccc}
\hline 
 \multirow{2}{*}{Scenario} & \multicolumn{9}{c}{\bf Right-Censoring Case}\\
 \cline{2-10}
  & $\alpha$ & $\phi$ & $\mu_G$ & $\mathbb{E}\left[Z_i|\delta_i=1\right]$ & $\mathbb{E}\left[Z_i|\delta_i=0\right]$& $\mathbb{P}\left(\delta_i=1\right)$& $W_d$ & $W_d^*$ & $K_d$\\ 
  \hline 
  1 &1.0 &0.10 &0.10 &5.000 &5.000  &0.500 &0.000 &0.000 &0.000\\
  2 &1.5 &0.10 &0.10 &6.236 &4.407  &0.473 &0.341 &0.055 &0.196\\
  3 &0.5 &0.10 &0.10 &2.918 &6.495  &0.546 &2.682 &0.413 &0.325\\
  4 &1.0 &0.05 &0.01 &16.667&16.667 &0.833 &0.000 &0.000 &0.000\\
  5 &0.5 &0.05 &0.05 &5.836 &12.991 &0.546 &2.586 &0.199 &0.325\\
  6 &1.5 &0.05 &0.20 &6.005 & 4.092 &0.131 &0.394 &0.066 &0.205\\
\hline
\multirow{2}{*}{Scenario} & \multicolumn{9}{c}{\bf Left-Censoring Case}\\
  \cline{2-10}
  & $\alpha$ & $\phi$ & $\mu_G$ & $\mathbb{E}\left[Z_i|\delta_i=1\right]$ & $\mathbb{E}\left[Z_i|\delta_i=0\right]$& $\mathbb{P}\left(\delta_i=1\right)$& $W_d$ & $W_d^*$ & $K_d$\\ 
\hline
 1& 1.0& 0.10& 0.10& 15.000& 15.000 & 0.500& 0.000 & 0.000 & 0.000\\
 2& 1.5& 0.10& 0.10& 11.531& 16.236 & 0.527& 30.594& 1.884 & 0.192\\
 3& 0.5& 0.10& 0.10& 40.514& 12.918 & 0.454& 30.934 & 0.764& 0.320\\
 4& 1.0& 0.05& 0.01& 36.667& 116.667& 0.167& 346.532& 2.970& 0.464\\
 5& 0.5& 0.05& 0.05& 81.027& 25.836 & 0.454& 57.915 & 0.715& 0.320\\
 6& 1.5& 0.05& 0.20& 19.877& 11.005 & 0.869& 32.643 & 1.642& 0.360\\
  \hline 
\end{tabular}
\end{table}

For the right-censoring model characterization, the results from Scenarios 1 and 4 ($\alpha = 1$) where the Weibull distribution reduces to the exponential distribution and the censoring times follow a similar distribution, show that the conditional distributions of $Z_i$ given $\delta_i = 1$ and $\delta_i = 0$ coincide. This is demonstrated by the equality of the conditional expectations, that is, $\mathbb{E}\left[Z_i|\delta_i=1\right]=\mathbb{E}[Z_i|\delta_i=0],\,\, \forall i \in \{1,2,\ldots,n\}$, as well as by the Wasserstein and Kolmogorov distances both being zero. This behavior, which can be interpreted as a direct consequence of the memoryless property of the exponential distribution, is further illustrated by the conditional survival curves $S_{Z_i|\delta_i = 1}(z_i)$ and $S_{Z_i|\delta_i = 0}(z_i)$, which virtually coincide, as shown in Figure~\ref{figMin}.

The results further show that in Scenarios 2 and 6, where $\alpha = 1.5$, the Weibull distribution exhibits an increasing hazard function. In this case, the conditional expectations satisfy $\mathbb{E}\left[Z_i|\delta_i=1\right] > \mathbb{E}\left[Z_i|\delta_i=0\right]$, indicating that individuals who experience the event tend to have larger observed survival times, on average, than those who are censored. Consequently, the conditional survival functions diverge moderately, with the performance measures leading to values near zero. The maximum discrepancy between the conditional survival functions increases when the hazard rate exhibits a decreasing hazard rate, $\alpha = 0.5$ (Scenarios 3 and 5). In this case, the opposite pattern emerges: $\mathbb{E}\left[Z_i|\delta_i=1\right] < \mathbb{E}\left[Z_i|\delta_i=0\right]$, indicating that individuals who experience the event tend, on average, to have small observed times than those who are censored. These findings demonstrate that $Z_i$ and $\delta_i$ are no longer independent, especially when the hazard rate decreases over time, as evidenced by the results shown in Table~\ref{TabsimuRes} and panels 2, 3, 5, and 6 of Figure~\ref{figMin}.

\begin{figure}[H]
\centering
\includegraphics[width=1.0\linewidth]{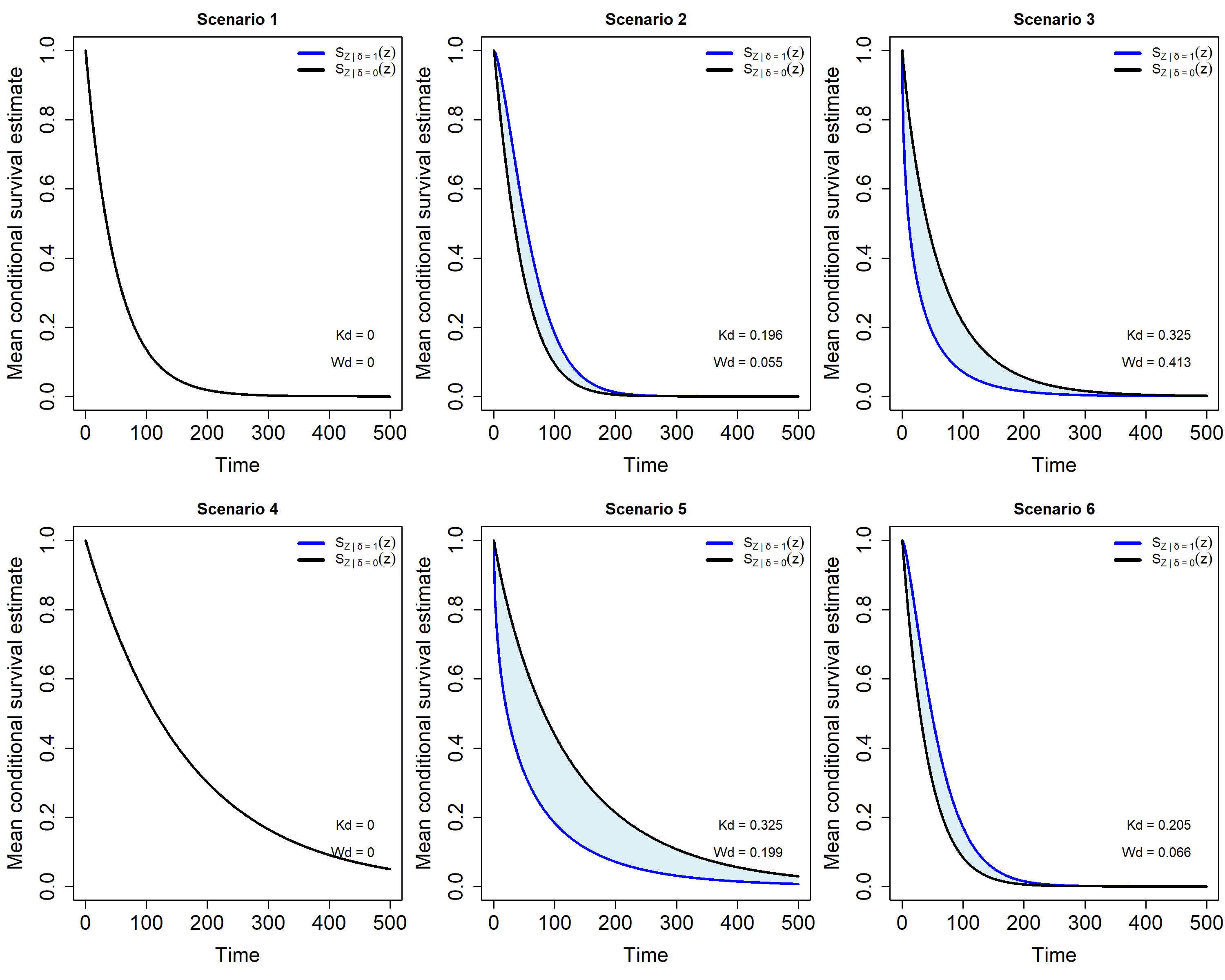}\\
\vspace{-0.3cm}
\caption{Conditional survival functions under the right-censoring characterization are shown for six simulation scenarios. The blue and black curves represent $S_{Z|\delta=1}(z)$ and $S_{Z|\delta=0}(z)$, respectively. The shaded region highlights the discrepancy between these conditional survival curves.}
\label{figMin}
\end{figure}

\begin{figure}[H]
\centering
\includegraphics[width=1.0\linewidth]{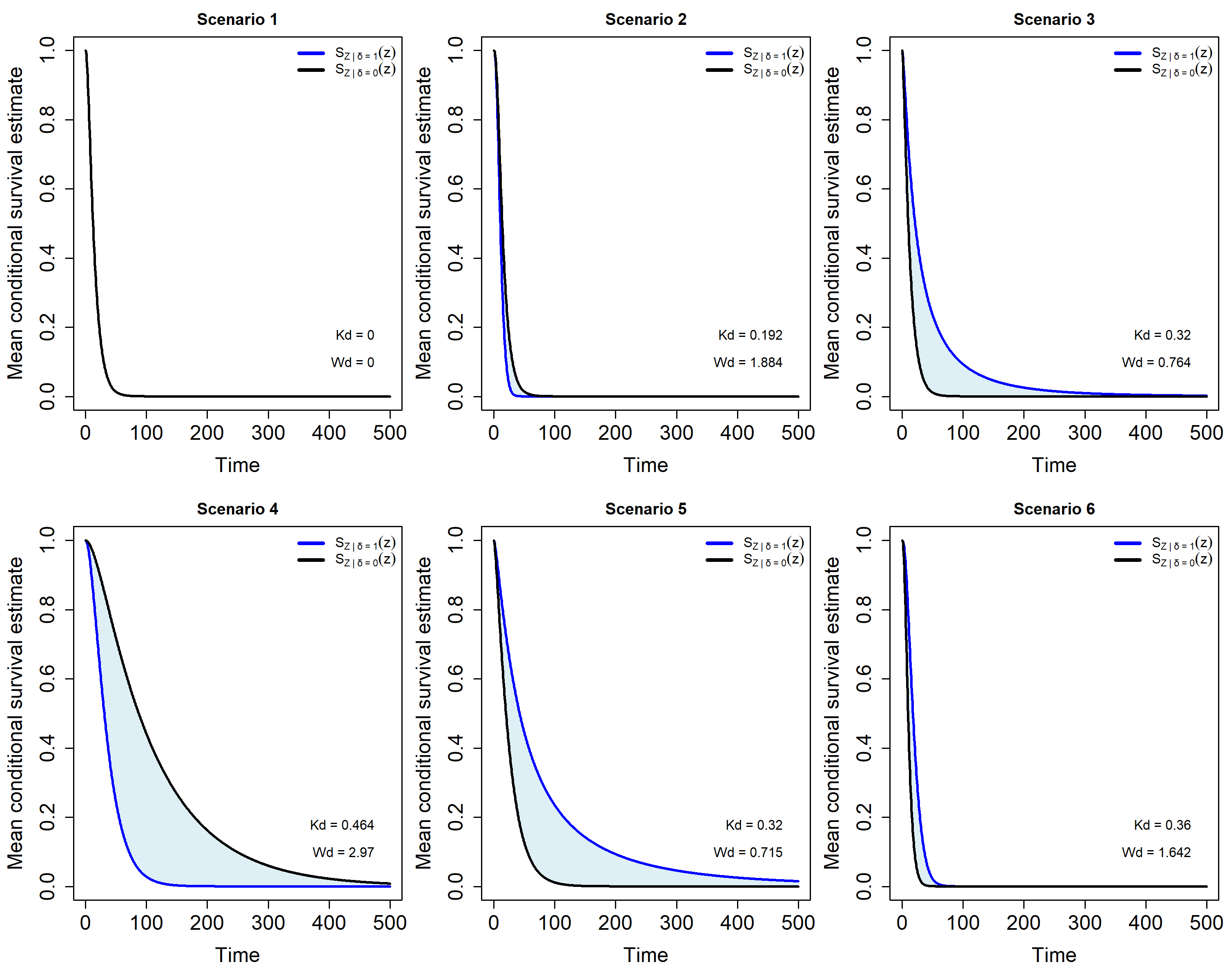}\\
\vspace{-0.3cm}
\caption{Conditional survival functions under the left-censoring characterization are shown for six simulation scenarios. The blue and black curves represent $S_{Z|d=1}(z)$ and $S_{Z|d=0}(z)$, respectively. The shaded region highlights the discrepancy between these conditional survival curves.}
\label{figMax}
\end{figure}

For the maximum characterization model, the criteria for independence between $Z_i$ and $\delta_i$ are particularly stringent. Although setting the Weibull shape parameter to $(\alpha=1)$ is necessary, it is not sufficient. As shown in Table~\ref{TabsimuRes}, independence is achieved only when the additional condition $\phi=\mu_G$ (Scenario 1) is also satisfied. For example, although Scenario 4 fulfills the requirement of $(\alpha=1)$, the fact that $\phi\neq \mu_G$ leads to unequal conditional means. In particular, $\mathbb{E}\left[Z_i|\delta_i=1\right] < \mathbb{E}\left[Z_i|\delta_i=0\right]$, confirming that $S_{Z_i|\delta_i = 1}(z_i)\neq S_{Z_i|\delta_i = 0}(z_i)$ in average. As a result, all distance measures are strictly greater than zero, with the Kolmogorov distance being the smallest among those considered.

Unlike in the right-censoring case, the values of $W_d^{*}$ were lower when the failure times $Y_i$ were generated from a Weibull distribution with a decreasing hazard rate, $0<\alpha<1$, than with an increasing hazard rate, $\alpha>1$. In contrast, the Kolmogorov distance displayed the opposite pattern. The pairs of scenarios (3, 5) yield identical Kolmogorov distance values, even though their conditional means differ substantially. These results indicate that variations in the scale parameter $\phi$ and the global censoring intensity $\mu_G$ have only a negligible impact on the values of $K_d$, further underscoring the robustness of this distance measure with respect to these parameters.

Similarly, all conclusions drawn for the left-censoring case are consistent with the patterns observed in Figure~\ref{figMax}.
}

\subsection{Discussion of numerical results}\label{subsec:discussion}

{
The results show that the Weibull shape parameter, $\alpha$, is the primary factor governing the dependence between survival time $Z_i$ and event indicator $\delta_i$. Specifically, for the distributions considered here, no combination of the scale parameters $\phi$ and $\mu_G$ can ensure independence when $\alpha \neq 1$. This conclusion follows directly from the distinct hazard structures of the failure and censoring mechanisms. Under the classical right-censoring framework, where the observed time is defined by the minimum operator, independence between $Z_i$ and $\delta_i$ is relatively easy to achieve. The necessary and sufficient condition is that both the failure times $Y_i$ and the censoring times $C_j$,  $j=1,\ldots,m-1$, are exponentially distributed. Notably, this condition imposes no restrictions on the associated rate parameters, indicating that independence holds across an entire family of exponential models, regardless of the specific values of $\phi$ and $\mu_G$. Therefore, independence is a robust and naturally occurring property in conventional survival analysis.

A major finding of this study is the contrast between the minimum and maximum operators. Unlike the minimum operator, the maximum operator in the left-censoring framework requires more than exponential failure and censoring distributions to achieve independence. Although exponentiality is necessary, it is not sufficient; independence occurs only when the failure and censoring times are independent and identically distributed, which, under the adopted parametrization, corresponds to $\phi = \mu_G$.

These results demonstrate that achieving independence between the observed time and the event indicator is substantially easier with the minimum operator than with the maximum operator. While the classical survival analysis setting permits independence across a broad range of exponential parameterizations, the maximum construction exhibits this property only under highly restrictive conditions. This fundamental distinction provides a new probabilistic characterization of the two observation mechanisms and shows that the proposed Kolmogorov and Wasserstein distance measures effectively capture their distinct dependence structures.

}


\section{Concluding remarks} \label{FinalR}

This paper developed a general framework for characterizing the independence between an order statistic and its corresponding rank indicator. The proposed approach is based on a probability measure obtained by reweighting the distribution of a single observation according to its conditional probability of occupying a prescribed rank. This construction yields explicit expressions for the associated weighting functions and provides a unified characterization of independence in terms of the equality between conditional and unconditional distributions.

Specializing the general framework to the minimum and maximum order statistics led to new characterizations of independent right- and left-censoring mechanisms, including both single and multiple censoring schemes. In addition, we established broad sufficient conditions based on proportional hazards and proportional reversed hazards models, together with complete characterizations in the classical single-censoring setting. The proposed Kolmogorov and Wasserstein distance measures further provide natural and distribution-free tools for quantifying departures from independence.

The numerical results corroborate the theoretical findings and demonstrate that the dependence structure is fundamentally determined by the interaction between the failure and censoring mechanisms. Although both the minimum and maximum operators are constructed from the same underlying random variables, they exhibit markedly different probabilistic behaviors. In particular, independence is considerably easier to achieve in the right-censoring setting than in the left-censoring setting. Whereas the minimum operator requires only proportional hazards structures---and, in the classical case, exponential failure and censoring distributions---the maximum operator is governed by considerably more restrictive conditions.

These findings reveal a fundamental distinction between the probabilistic structures associated with the minimum and maximum operators, which are governed, respectively, by the hazard and reversed hazard functions. More broadly, the results show that independence is not merely a consequence of the marginal distributions of the failure and censoring times, but rather a structural property determined by the way in which these variables interact to generate the observed data. In this sense, the proposed framework provides both a theoretical foundation and practical tools for investigating dependence structures in survival analysis.

Finally, several directions for future research arise naturally from the present work. An immediate extension would be to investigate analogous characterizations under more general dependence structures, including models with dependent failure and censoring times, frailty models, and copula-based formulations. Another promising avenue would be to extend the proposed methodology to progressively censored samples, competing-risks models, recurrent-event data, and multistate processes. From an inferential perspective, the development of asymptotic theory, goodness-of-fit procedures, and hypothesis tests based on the proposed Kolmogorov and Wasserstein measures deserves further attention. More generally, the probabilistic framework introduced here may provide a useful foundation for studying independence properties involving other order statistics, rank indicators, and related stochastic processes.

\section*{Declarations}

\paragraph*{Ethics Approval}
Not applicable.

 \paragraph*{Funding Declaration}
 This study was financed in part by CAPES (Finance Code 001).


\paragraph*{Disclosure statement}
There are no conflicts of interest to disclose.

\paragraph*{Author Contributions Statement}
All authors contributed equally to the conception and design of the study, data analysis, interpretation of the results, manuscript preparation, and revision. All authors read and approved the final version of the manuscript.

\paragraph*{Data availability}
No data were used in this study.

\bibliographystyle{apalike}
\bibliography{refbib}

\end{document}